\documentclass{article}

\usepackage{arxiv}
\usepackage[utf8]{inputenc} 
\usepackage[T1]{fontenc}    
\usepackage{hyperref}       
\usepackage{url}            
\usepackage{booktabs}       
\usepackage{amsfonts}       
\usepackage{nicefrac}      
\usepackage{microtype}      
\usepackage{graphicx}
\usepackage{natbib}
\usepackage{doi}
\usepackage{subcaption}
\usepackage{algorithm}
\usepackage{algorithmicx}
\usepackage{algpseudocode}
\usepackage{amsmath,amsfonts,amssymb}
\usepackage{multirow}
\usepackage{amsthm}
\usepackage{xcolor}

\newtheorem{proposition}{Proposition}

\title{Efficient and Fast Tensor-Product Multinode Shepard Collocation for Elliptic PDEs on Cartesian Grids}

\date{}

\author{ \href{https://orcid.org/0000-0001-7608-5635}{\includegraphics[scale=0.06]{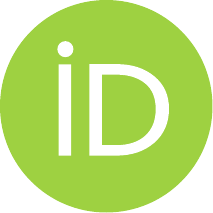}\hspace{1mm}Anouar El~Harrak}\thanks{Corresponding author.} \\
	SMMADCSN Laboratory, FPL \\
    Abdelmalek Essaadi University\\
	Tetouan, Morocco \\
	\texttt{a.elharrak@uae.ac.ma} \\
	\And
	\href{https://orcid.org/0000-0002-5988-650X}{\includegraphics[scale=0.06]{orcid.pdf}\hspace{1mm}Hatim Tayeq} \\
	SMMADCSN Laboratory, FPL \\
    Abdelmalek Essaadi University\\
	Tetouan, Morocco \\
	\texttt{h.tayeq@uae.ac.ma} \\
	\And
	\href{https://orcid.org/0000-0003-2314-6873}{\includegraphics[scale=0.06]{orcid.pdf}\hspace{1mm}Benaissa Zerroudi} \\
	Laboratory of Engineering Sciences \\
    Ibn Zohr University, 80080\\
    Agadir, Morocco\\
	\texttt{zerroudi@gmail.com} \\
}

\renewcommand{\shorttitle}{}
\begin{document}
\maketitle

\begin{abstract}
We introduce a Grid-Based Multinode Shepard Collocation Method (GBMSC) for two-dimensional elliptic boundary value problems on rectangular domains. The method combines Shepard-type partition functions with tensor-product Lagrange interpolation on overlapping local Cartesian subgrids. The Cartesian structure avoids the local unisolvency search required by multinode Shepard constructions on scattered data, while the local character of the approximation yields sparse collocation matrices and moderate conditioning. We establish the main structural properties of the method, including the cardinal property, tensor-product polynomial reproduction, and local interpolation estimates. Numerical experiments for Poisson problems with Dirichlet and mixed boundary conditions confirm the polynomial reproduction property up to round-off accuracy and show regular convergence for smooth non-polynomial solutions. Comparisons with Multinode Shepard Collocation Method (MSC) and Kansa's collocation methods indicate that the proposed discretization preserves high accuracy with significantly better conditioning than the RBF-based schemes considered.
\end{abstract}

\keywords{
Multinode Shepard method \and Tensor-product interpolation \and Collocation method \and Elliptic boundary value problems \and Conditioning}

\section{Introduction}
Partial differential equations (PDEs) are a standard mathematical tool for the description of phenomena arising in science and engineering, including climate modelling \cite{ghil2020physics,kalnay2003atmospheric}, air quality forecasting \cite{seinfeld2016atmospheric,tayeq2021self}, medical imaging \cite{aubert2006mathematical,chan2005image}, financial market modelling \cite{achdou2012partial,el2023pdes}, and aerospace engineering \cite{petrila2005basics,mavris2012overview}. Since explicit solutions are rarely available for models of interest, the construction of accurate and stable numerical methods remains a central topic in numerical analysis.

Classical discretization techniques such as the finite element method and beyond are supported by a well-developed mathematical theory and are widely used for the numerical solution of PDEs \cite{alves2021numerical,ciarlet2002finite,quarteroni2008numerical,leveque2007finite,thomee2006galerkin}. However, their implementation may become demanding when dimension increases, or when the generation and management of mesh connectivity become computationally expensive \cite{alves2021numerical, tayeq2026optimizedd}. Meshless methods were introduced, in part, to reduce this dependence on structured connectivity \cite{fasshauer2007meshfree,buhmann2003radial,wendland2005scattered}. Kansa's method approximates the unknown solution by radial basis functions and imposes the differential equation together with the boundary conditions at the collocation nodes \cite{kansa1990multiquadricsI,kansa1990multiquadricsII}. This meshless formulation is useful for irregular point distributions, but it usually leads to dense collocation matrices \cite{fasshauer2007meshfree,fornberg2015primer}. In addition, the conditioning may become highly ill-conditioned and the accuracy is sensitive to the shape of the radial basis function \cite{fornberg2015primer,schaback1995error,fornberg2004flyer}.

Shepard-type methods provide another family of meshless approximation techniques. The classical Shepard operator interpolates scattered data using normalized inverse-distance weighting \cite{shepard1968two,zerroudi2024scattered}. Although it has a simple structure and good qualitative properties, its polynomial reproduction is limited to constants, and its approximation order is therefore low. Several modifications have been proposed to improve the approximation order by replacing function values with local polynomial interpolants and combining them through Shepard-type basis functions \cite{cavoretto2025shape,dell2024multinode,kocc2020finite,zerroudi2024approximate,zhang2021finite}. This idea leads to multinode Shepard-type constructions, which preserve the partition of unity structure and increase the polynomial precision.

More recently, Multinode Shepard Collocation method has been applied to elliptic PDEs and has shown advantages with respect to RBF-based discretizations, especially in terms of conditioning of the collocation matrix and accuracy of the computed solution \cite{dell2024multinode,dell2024enriched,dell2021solving}. However, in a scattered-data setting, the construction of local polynomial interpolants requires the selection of unisolvent subsets. This step may be computationally expensive and may fail when the local geometry is not suitable for the prescribed polynomial space.

The present paper proposes a grid-based Shepard construction designed for data arranged on Cartesian grids. The method combines Shepard-type weights with tensor-product Lagrange interpolation \cite{occorsio2022lagrange, shaalini2021new} on overlapping local Cartesian subgrids. In this way, the local polynomial interpolants are obtained directly from the grid structure, without solving local unisolvency problems on scattered nodes. The method is particularly natural on rectangular domains, where the boundary is explicitly described and the local tensor-product structure can be used without additional geometric processing.

The proposed GBMSC method is used for two-dimensional elliptic boundary value problems. The global approximation is written in terms of basis functions obtained by combining local Lagrange polynomials expressed in tensor-product form with Shepard weights. The derivatives of the basis are assembled from local differentiation matrices and derivatives of the weights. This construction leads to sparse discrete operators, since each matrix entry receives contributions only from local subgrids containing the corresponding pair of nodes. Moreover, the structure of the tensor-product gives exact reproduction of polynomials.

The numerical results examine Poisson problems with Dirichlet and mixed boundary conditions. We use polynomial solutions to validate the reproduction property and smooth non-polynomial solutions to study convergence, conditioning, sparsity, and the influence of the overlap parameters. Then, we make a comparison with the multinode Shepard method and Kansa's methods in terms of accuracy and condition numbers.

In Section 2, we introduce the grid-based Shepard construction in rectangular domains, define the construction of global basis functions, and establish the main properties of the method, including cardinality, polynomial reproduction, and local interpolation estimates. Section 3 describes the discretization of elliptic problems and the sparse assembly of the corresponding matrices, presents numerical experiments, and discusses the influence of polynomial degree, overlap, and grid refinement on accuracy and conditioning.

\section{Grid-Based Multinode Shepard Collocation Method on Rectangular Domains} 
\subsection{Construction of Overlapping Cartesian Subgrids}
Let $\mathcal{D}=[a,b]\times[c,d]\subset\mathbb{R}^2$ and let 
\[
\mathcal{X}=\{\mathbf{x}_{i,j}=(x_i,y_j):\ i=1,\dots,N_x,\ \ j=1,\dots,N_y\}, 
\]
be a Cartesian grid on $\mathcal{D}$, composed of $N = N_x \times N_y$ points, such that 
\[
a=x_1<x_2<\ldots<x_{N_x}=b\quad\text{and}\quad c=y_1<y_2<\ldots<y_{N_y}=d.
\]

The construction of our new method is based on a family of local Cartesian subgrids.
In sliding-subgrid construction, a local subgrid of size $n_x\times n_y$, with $n_x$ points in the $x$-direction and $n_y$ points in the $y$-direction, is obtained by fixing a starting index $(i,j)$ and setting
\[
\mathcal{X}_{i,j}
=
\{\mathbf{x}_{i+r,j+s}=(x_{i+r},y_{j+s}):\
r=0,\ldots,n_x-1,\ \ s=0,\ldots,n_y-1\},
\]
such that
\[i+n_x-1\leq N_x \quad \text{and}\quad j+n_y-1\leq N_y.\]
The corresponding geometric support is the rectangle
\[
\Omega_{i,j}
=
[x_i,x_{i+n_x-1}]
\times
[y_j,y_{j+n_y-1}].
\]

Let $(\rho_x,\rho_y)$ be the step used to select consecutive local subgrids. Then, Figure~\ref{fig:sliding_subgrids}, the right and upper neighboring subgrids of $\mathcal X_{i,j}$ are, respectively,
\[
\mathcal X_{i+\rho_x,j}
\qquad\hbox{and}\qquad
\mathcal X_{i,j+\rho_y}.
\]
Hence, the numbers of common grid points in the two coordinate directions are
\[
\eta_x
=
|\mathcal X_{i,j}\cap\mathcal X_{i+\rho_x,j}|_x
=
n_x-\rho_x\quad\text{and}\quad\eta_y
=
|\mathcal X_{i,j}\cap\mathcal X_{i,j+\rho_y}|_y
=
n_y-\rho_y.
\]
Here, $|\cdot|_x$ and $|\cdot|_y$ denote the numbers of grid points in the horizontal and vertical directions, respectively. If the same overlap is imposed in both directions, we write
\[
\eta=\eta_x=\eta_y.
\]

\begin{figure}
    \centering
    \includegraphics[width=0.5\linewidth]{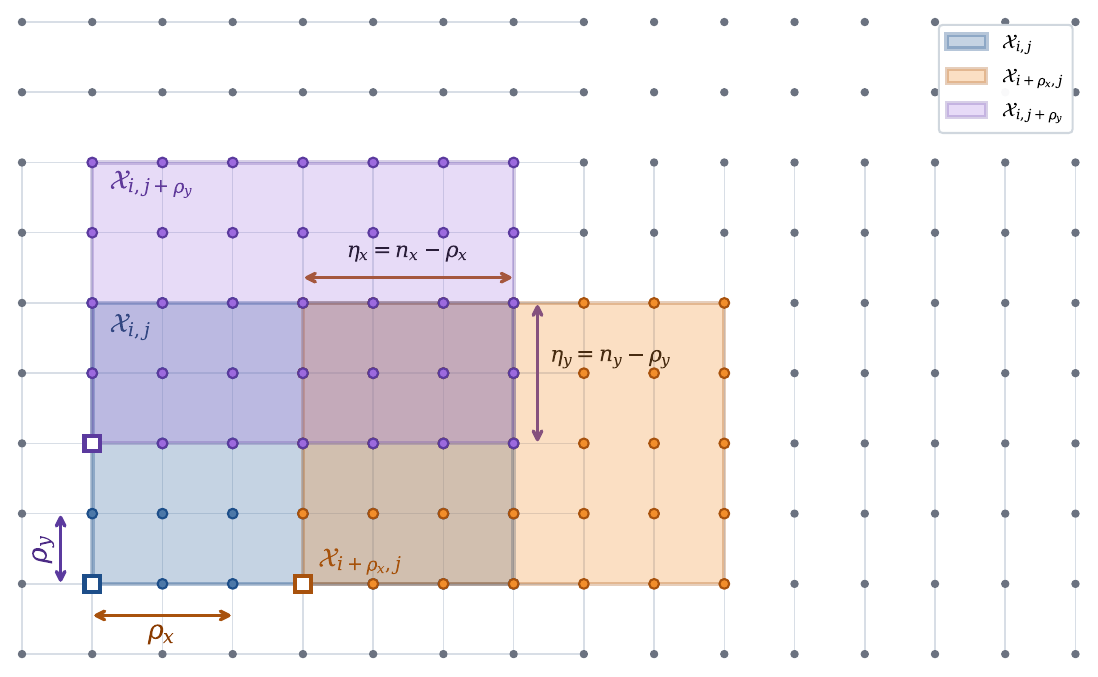}
    \caption{Overlapping between neighboring local subgrids. The blue rectangle represents the current subgrid, while the orange and purple rectangles represent the right and upper neighboring subgrids $\mathcal X_{i+\rho_x,j}$ and $\mathcal X_{i,j+\rho_y}$, respectively.}
    \label{fig:sliding_subgrids}
\end{figure}

The case $\eta=0$ corresponds to consecutive local subgrids without common grid points, whereas $0<\eta<n_x$ means that neighboring subgrids share $\eta$ points in each coordinate direction. In the fully sliding case,
\[
(\rho_x,\rho_y)=(1,1),
\]
we obtain
\[
\eta_x=n_x-1
\quad \text{and}\quad 
\eta_y=n_y-1.
\]
Therefore, the fully sliding construction gives the maximal overlap between consecutive sampled subgrids.

Let
\[
\{i_1,\ldots,i_{M_x}\},
\quad \text{and}\quad 
\{j_1,\ldots,j_{M_y}\}
\]
be the admissible starting indices of the sliding subgrids in the $x$- and $y$-directions, respectively. Thus, each local subgrid is first identified by a pair $(i_{\mathbf  p},j_\mathbf{q})$ and is denoted by
\[
\mathcal{X}_{i_{\mathbf p},j_\mathbf q}.
\]
For notational convenience, we replace this double index by a single index
\[
k=({\mathbf p}-1)M_y+\mathbf q,
\quad {\mathbf p}=1,\ldots,M_x,\quad \mathbf q=1,\ldots,M_y,
\]
and we set
\[
\mathcal{X}_k:=\mathcal{X}_{i_{\mathbf p},j_\mathbf q}.
\]
Consequently, the family of sampled local subgrids can be written equivalently as
\[
\{\mathcal{X}_{i_{\mathbf p},j_\mathbf q}:{{\mathbf p}=1,\ldots,M_x;\,\mathbf q=1,\ldots,M_y}\}
=
\{\mathcal{X}_k\}_{k=1}^{M},
\qquad M=M_xM_y.
\]
By construction, we assume that they satisfy
\[
\bigcup_{k=1}^M \mathcal{X}_k=\mathcal{X},
\]
and that adjacent subgrids overlap. The overlap is introduced in order to guarantee the interaction among neighboring local interpolants and to make possible the construction of a global approximant with the desired smoothness and stability properties.

\subsection{Construction of Local Interpolant}
We denote by
\[
\{u_{i,j}:\ i=1,\dots,N_x,\ \ j=1,\dots,N_y\}
\]
the values of a function $u:\mathcal{D}\rightarrow \mathbb{R}$ at the grid nodes $\mathbf{x}_{i,j}\in\mathcal{X}$.
Depending on the application, these values may represent exact nodal data or sampled measurements.

For every $k=1,\dots,M$, we define the local interpolation operator
\begin{equation}
\mathcal{I}_k[u](\mathbf{x})
=
\sum_{r=1}^{n_x}\sum_{s=1}^{n_y} u_{r,s}^k\, \Phi_{r,s}^k(\mathbf{x}),
\qquad \mathbf{x}=(x,y)\in\mathbb{R}^2,
\label{eq:Loca_Int_rew}
\end{equation}
where 
\[
\Phi_{r,s}^k(\mathbf{x})=\ell_{x,r}^k(x)\,\ell_{y,s}^k(y),
\]
$\ell_{x,r}^k$, $\ell_{y,s}^k$ are the one-dimensional Lagrange basis polynomials associated with the nodes
\[
\{x_r^k\}_{r=1}^{n_x},\qquad \{y_s^k\}_{s=1}^{n_y}, 
\]
and $u_{r,s}^k$ denotes the value of $u$ at the corresponding local nodes $(x_{r}^k,y_s^k) \in \mathcal{X}_k$. Thus, $\mathcal{I}_k[u]$ is the tensor-product Lagrange interpolant on the local grid $\mathcal{X}_k$.

Let $C^{n_x,\,n_y}(\Omega_k)$ be the space of functions
$u : \Omega_k \to \mathbb{R}$ whose partial derivatives
$\partial_x^\alpha \partial_y^\beta u$ exist and are continuous on
$\Omega_k$ for all integers $\alpha, \beta$ satisfying
$0 \le \alpha \le n_x$ and $0 \le \beta \le n_y$.
\begin{proposition}
Let $\Omega_k$ be the geometric support of the subgrid $\mathcal X_k$ of size $n_x\times n_y$ with $n_x=m_x+1$ and $n_y=m_y+1$, and $u\in C^{m_x+1,m_y+1}(\Omega_k)$. Then there exists a constant $C_k>0$, depending on $u$ and the degrees $m_x,m_y$, such that
\[
\|u-\mathcal I_k[u]\|_{\Omega_k}
\leq
C_k
\left(
h^{m_x+1}+h^{m_y+1}+h^{m_x+m_y+2}
\right),
\]
where $\|.\|_{\Omega_k}$ is the supremum norm on $\Omega_k$,
\[
h=\max\{h_{x,k}, h_{y,k}\},\quad h_{x,k}=\max_{1\leq r\leq n_x-1}|x_r^k-x_{r+1}^k|,
\quad\text{and}\quad
h_{y,k}=\max_{1\leq s\leq n_y-1}|y_s^k-y_{s+1}^k|.
\]
In particular, if $m=m_x=m_y$ and $h<1$, then
\[
\|u-\mathcal I_k[u]\|_{\Omega_k}
\leq
\mathcal C_kh^{m+1}.
\]
\end{proposition}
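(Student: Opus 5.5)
The plan is to split the tensor-product interpolation error into two one-dimensional interpolation errors and a mixed term, and then to bound each piece with the classical Lagrange remainder formula. Write the univariate interpolation operators on $\Omega_k$ as
\[
P_x[v](x,y)=\sum_{r=1}^{n_x} v(x_r^k,y)\,\ell_{x,r}^k(x),
\qquad
P_y[v](x,y)=\sum_{s=1}^{n_y} v(x,y_s^k)\,\ell_{y,s}^k(y).
\]
By the definition \eqref{eq:Loca_Int_rew}, we have $\mathcal I_k=P_xP_y=P_yP_x$. This gives the standard identity
\[
u-\mathcal I_k[u]=(u-P_x[u])+(u-P_y[u])-(I-P_x)(I-P_y)[u].
\]
Each of the three terms will be bounded separately in the supremum norm on $\Omega_k$.

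\textbf{The two one-dimensional terms.} Fix $y\in[y_{j},y_{j+n_y-1}]$ and apply the univariate Lagrange remainder in $x$:
\[
u(x,y)-P_x[u](x,y)=\frac{\partial_x^{m_x+1}u(\xi,y)}{(m_x+1)!}\,\omega_x(x),
\qquad
\omega_x(x)=\prod_{r=1}^{n_x}(x-x_r^k).
\]
For $x$ in the local interval, the node polynomial satisfies $|\omega_x(x)|\le \tfrac{m_x!}{4}\,h_{x,k}^{m_x+1}$. This can be obtained from the usual argument that places $x$ between two consecutive nodes and bounds the remaining factors by multiples of $h_{x,k}$. Alternatively, one can use the cruder bound $(n_x-1)^{n_x}h^{n_x}$ and absorb the degree-dependent factor into $C_k$. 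Either way,
\[
\|u-P_x[u]\|_{\Omega_k}\le c\,\|\partial_x^{m_x+1}u\|_{\Omega_k}\,h^{m_x+1}.
\]
The same argument in $y$ gives
\[
\|u-P_y[u]\|_{\Omega_k}\le c\,\|\partial_y^{m_y+1}u\|_{\Omega_k}\,h^{m_y+1}.
\]

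\textbf{The mixed term.} This term needs the most care, because it is where the regularity class $C^{m_x+1,m_y+1}$ enters. Set $v=(I-P_y)[u]$. Since $P_y$ acts only in $y$ with $x$-independent coefficients $\ell_{y,s}^k(y)$, it commutes with $\partial_x$. Therefore
\[
\partial_x^{m_x+1}v=(I-P_y)[\partial_x^{m_x+1}u].
\]
Apply the one-dimensional $x$-remainder to $v$, then the $y$-remainder to $\partial_x^{m_x+1}u$, which is legitimate because $\partial_y^{m_y+1}\partial_x^{m_x+1}u$ is continuous by hypothesis. This yields
\[
\|(I-P_x)(I-P_y)[u]\|_{\Omega_k}\le c\,\|\partial_x^{m_x+1}\partial_y^{m_y+1}u\|_{\Omega_k}\,h^{m_x+m_y+2}.
\]
The mean value points appear in both remainder formulas, so I would verify that the remainder formula is applied to a function of one variable at a time. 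Alternatively, I would use the Peano or divided-difference form of the remainder to avoid any dependence of $\xi$ on $y$ that could break differentiability.

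\textbf{Conclusion.} Collecting the three bounds, with $C_k$ the maximum of the three derivative norms times the degree-dependent constants, gives the stated estimate. For the particular case $m_x=m_y=m$ and $h<1$, we have $h^{2m+2}\le h^{m+1}$, so the right-hand side is bounded by $3C_kh^{m+1}=:\mathcal C_kh^{m+1}$.
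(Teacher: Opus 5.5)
Your proof is correct and is essentially the paper's argument. The paper splits $u-\mathcal I_k[u]=(u-I_y^{(k)}u)+(I-I_x^{(k)})I_y^{(k)}u$ and then bounds $\|I_y^{(k)}\partial_x^{m_x+1}u\|$ by $\|\partial_x^{m_x+1}u\|+\|(I-I_y^{(k)})\partial_x^{m_x+1}u\|$. That is your identity $(I-P_x)P_y=(I-P_x)-(I-P_x)(I-P_y)$ applied after differentiating, and it rests on the same commutation $\partial_x P_y=P_y\partial_x$ and the same two one-dimensional remainder bounds. Your worry about the mean-value points is unnecessary, since you only chain sup-norm estimates and never differentiate $\xi$.
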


\begin{proof}
We denote by $I_x^{(k)}$ and $I_y^{(k)}$ the one-dimensional Lagrange interpolation operators associated with the $x$- and $y$-nodes of $\mathcal X_k$, respectively. Thus,
\[
\mathcal I_k=I_x^{(k)}I_y^{(k)} .
\]
Since $I_y^{(k)}$ acts only on the variable $y$, it commutes with differentiation with respect to $x$. Hence, for every integer $\textbf{r}\geq0$,
\[
\partial_x^\textbf{r}\bigl(I_y^{(k)}u\bigr)
=
I_y^{(k)}\bigl(\partial_x^\textbf{r} u\bigr).
\]

We decompose the interpolation error as
\[
u-\mathcal I_k[u]
=A_1+A_2,
\]
where \[A_1:=\bigl(u-I_y^{(k)}u\bigr)
\quad\text{and}\quad
A_2:=\bigl(I_y^{(k)}u-I_x^{(k)}I_y^{(k)}u\bigr).
\]

The term $A_1$ is the one-dimensional interpolation error in the $y$-direction. Therefore, by the standard one-dimensional Lagrange remainder estimate,
\[
\|A_1\|_{\Omega_k}
\leq
C_{y,k}h_{y,k}^{m_y+1}
\|\partial_y^{m_y+1}u\|_{\Omega_k}.
\]
For fixed $y$, $A_2$ is the interpolation error in the $x$-direction of the function $I_y^{(k)}u$. Hence
\[
\|A_2\|_{\Omega_k}
\leq
C_{x,k}h_{x,k}^{m_x+1}
\left\|
\partial_x^{m_x+1}\bigl(I_y^{(k)}u\bigr)
\right\|_{\Omega_k}.
\]
Using the commutation property with $\textbf{r}=m_x+1$, and setting
\[
v=\partial_x^{m_x+1}u,
\]
we get
\[
\partial_x^{m_x+1}\bigl(I_y^{(k)}u\bigr)
=
I_y^{(k)}v .
\]
Furthermore,
\[
\|I_y^{(k)}v\|_{\Omega_k}
\leq
\|v\|_{\Omega_k}
+
\|v-I_y^{(k)}v\|_{\Omega_k}.
\]
Applying again the one-dimensional interpolation estimate in the $y$-direction gives
\[
\|v-I_y^{(k)}v\|_{\Omega_k}
\leq
C_{y,k}h_{y,k}^{m_y+1}
\|\partial_y^{m_y+1}v\|_{\Omega_k}.
\]
Since
\[
\partial_y^{m_y+1}v
=
\partial_x^{m_x+1}\partial_y^{m_y+1}u,
\]
we obtain
\[
\|I_y^{(k)}v\|_{\Omega_k}
\leq
\|\partial_x^{m_x+1}u\|_{\Omega_k}
+
C_{y,k}h_{y,k}^{m_y+1}
\|\partial_x^{m_x+1}\partial_y^{m_y+1}u\|_{\Omega_k}.
\]
Consequently,
\[
\|A_2\|_{\Omega_k}
\leq
C_{x,k}h_{x,k}^{m_x+1}
\left(
\|\partial_x^{m_x+1}u\|_{\Omega_k}
+
C_{y,k}h_{y,k}^{m_y+1}
\|\partial_x^{m_x+1}\partial_y^{m_y+1}u\|_{\Omega_k}
\right).
\]
Combining the estimates for $A_1$ and $A_2$, and absorbing the constants into a single constant $C$, yields
\[
\|u-\mathcal I_k[u]\|_{\Omega_k}
\leq
C
\left(
h_{x,k}^{m_x+1}\|\partial_x^{m_x+1}u\|_{\Omega_k}
+
h_{y,k}^{m_y+1}\|\partial_y^{m_y+1}u\|_{\Omega_k}
+
h_{x,k}^{m_x+1}h_{y,k}^{m_y+1}
\|\partial_x^{m_x+1}\partial_y^{m_y+1}u\|_{\Omega_k}
\right).
\]
Hence, \[
\|u-\mathcal I_k[u]\|_{\Omega_k}
\leq
C_k
\left(
h^{m_x+1}+h^{m_y+1}+h^{m_x+m_y+2}
\right),
\]
If $m=m_x=m_y$ and $h<1$, 
\[
h^{2m+2}\leq h^{m+1},
\]
and therefore
\[
\|u-\mathcal I_k[u]\|_{\Omega_k}
\leq
\mathcal C_kh^{m+1}.
\]
\end{proof}

\subsection{Construction of Global Interpolant}
In order to combine the local interpolants into a global approximation operator, we associate with each subgrid $\mathcal{X}_k$ a nonnegative weight function
\[
\mathcal{W}_k:\mathbb{R}^2\to[0,1],
\]
satisfying the partition of unity condition
\begin{equation}
\sum_{k=1}^M \mathcal{W}_k(\mathbf{x})=1,
\qquad \mathbf{x}\in\mathbb{R}^2.
\label{eq:Punity_rew}
\end{equation}
The global operator is then defined by
\begin{equation}
\mathcal{G}[u,\mathcal{X}](\mathbf{x})
=
\sum_{k=1}^M \mathcal{W}_k(\mathbf{x})\,\mathcal{I}_k[u](\mathbf{x}).
\label{eq:Oper_g_rew}
\end{equation}

In the present framework, the weights are defined on the whole plane and are obtained by normalization of inverse-distance products, namely, for $\mathbf{x}\in\mathbb{R}^2$,
\begin{equation}
\mathcal{W}_k(\mathbf{x})
=
\frac{\omega_k(\mathbf{x})}
{ \sum_{l=1}^M \omega_l(\mathbf{x})},
\label{eq:weights_raw}
\end{equation}
 with
\[
\omega_k(\mathbf{x})=\prod_{\mathbf{x}'\in\mathcal{X}_k}\|\mathbf{x}-\mathbf{x}'\|^{-\mu},\qquad \mu>2.
\] 
By construction, the functions $\mathcal{W}_k$ satisfy the partition of unity property given by \ref{eq:Punity_rew}. Moreover, they enjoy cardinal-type properties.
\begin{itemize}
\item[$\mathrm{P1}$:] For every $\mathbf{x}_i\in\mathcal{X}\setminus\mathcal{X}_k$: \[\mathcal{W}_k(\mathbf{x}_i)=0,\quad \nabla \mathcal{W}_k(\mathbf{x}_i)=0,\quad\text{and}\quad H\mathcal{W}_k(\mathbf{x}_i)=0.\]
\item[$\mathrm{P2}$:] For every $\mathbf{x}_i\in\mathcal{X}$: \
\[
\sum_{k:\,\mathbf{x}_i\in\mathcal{X}_k}\mathcal{W}_k(\mathbf{x}_i)=1,\quad 
\sum_{k:\,\mathbf{x}_i\in\mathcal{X}_k}\nabla \mathcal{W}_k(\mathbf{x}_i)=0,\quad\text{and}\quad
\sum_{k:\,\mathbf{x}_i\in\mathcal{X}_k}H\mathcal{W}_k(\mathbf{x}_i)=0.
\]

where $\nabla$ is the gradient operator and $H$ is the Hessian operator.
\end{itemize}

For the numerical treatment of differential operators, it is convenient to rewrite $\mathcal{G}[u,\mathcal{X}]$ explicitly as a linear combination of the nodal values $u_{i,j}$. Substituting \ref{eq:Loca_Int_rew} into \ref{eq:Oper_g_rew} gives
\[
\mathcal{G}[u,\mathcal{X}](\mathbf{x})
=
\sum_{k=1}^M\sum_{r=1}^{n_x}\sum_{s=1}^{n_y}
u_{r,s}^k\,\Phi_{r,s}^k(\mathbf{x})\,\mathcal{W}_k(\mathbf{x}).
\]
Collecting equal nodal values, we obtain
\begin{equation}\label{eq:G_double_index}
\mathcal{G}[u,\mathcal{X}](\mathbf{x})
=\sum_{i=1}^{N_x}\sum_{j=1}^{N_y} u_{i,j}\,B_{i,j}(\mathbf{x}),
\end{equation}

where
\begin{equation}
B_{i,j}(\mathbf{x})
=
\sum_{{k'}\in K_{i,j}} \Phi_{i,j}^{k'}(\mathbf{x})\,\mathcal{W}_{{k'}}(\mathbf{x}),
\label{eqpointBase_rew}
\end{equation}
and $K_{i,j}$ denotes the set of indices ${k'}$ such that $\mathbf{x}_{i,j}\in\mathcal{X}_{{k'}}$.

For notational convenience, we replace this double index $(i,j)$ by a single index $\lambda$ by introducing the standard bijection
\[
\lambda=(i-1)N_y+j,
\qquad 1\le i\le N_x,\quad 1\le j\le N_y.
\]
The representation form given by Formula \ref{eq:G_double_index} can be written in single-index form as
\begin{equation}
\mathcal{G}[u,\mathcal{X}](\mathbf{x})
=
\sum_{\lambda=1}^{N} u_\lambda\,B_\lambda(\mathbf{x}),
\qquad N=N_x\times N_y.
\label{eqUwithsinglsum_rew}
\end{equation}
Here, 
\[
B_\lambda(\mathbf x)
=
\sum_{k\in K_\lambda}
\Phi_\lambda^{k}(\mathbf x)\mathcal W_k(\mathbf x),
\qquad
K_\lambda=\{k:\mathbf x_\lambda\in\mathcal X_k\}.
\]
The form \ref{eqUwithsinglsum_rew} is used in the discretization of the differential problems considered in the following.

The following proposition states the cardinal property of the global basis functions $B_i$ and the reconstruction property of the operator $\mathcal G$.
\begin{proposition}~~

\begin{itemize}
    \item The global basis functions $B_i$ satisfy the cardinal property
    \[B_i(\mathbf x_j)=\delta_{ij},\qquad i,j=1,\ldots,N.\]
    \item The operator $\mathcal G$ interpolates the nodal data
    \[\mathcal G[u,\mathcal X](\mathbf x_j)=u_j,\qquad j=1,\ldots,N.\]
\end{itemize}
\end{proposition}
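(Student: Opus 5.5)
We will prove the cardinal property first; the interpolation property then follows immediately from the single-index representation \eqref{eqUwithsinglsum_rew}. Fix two grid nodes $\mathbf x_i,\mathbf x_j\in\mathcal X$. By definition,
\[
B_i(\mathbf x_j)=\sum_{k\in K_i}\Phi_i^{k}(\mathbf x_j)\,\mathcal W_k(\mathbf x_j),
\]
and we split the sum over $k\in K_i$ according to whether $\mathbf x_j\in\mathcal X_k$ or not.

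\emph{Terms with $\mathbf x_j\notin\mathcal X_k$.} By property P1 of the weights, $\mathcal W_k(\mathbf x_j)=0$. Since $\Phi_i^k$ is a polynomial and hence finite at $\mathbf x_j$, every such term vanishes. This is the one point that needs care, since $\omega_k$ is singular at the nodes of $\mathcal X_k$. We would verify P1 directly: multiply numerator and denominator of \eqref{eq:weights_raw} by $\prod_{l\in K_j}\|\mathbf x-\mathbf x_j\|^{\mu}$. The numerator then tends to $0$ when $\mathbf x_j\notin\mathcal X_k$, while the denominator tends to a positive limit, because $K_j\neq\emptyset$ by the covering assumption $\bigcup_k\mathcal X_k=\mathcal X$. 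Hence $\mathcal W_k$ extends continuously to $\mathbf x_j$ with value $0$, and the sum $\sum_{l\in K_j}\mathcal W_l(\mathbf x_j)=1$ in P2 follows from the same computation.

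\emph{Terms with $\mathbf x_j\in\mathcal X_k$.} Both $\mathbf x_i$ and $\mathbf x_j$ are nodes of the tensor subgrid $\mathcal X_k$. The local basis $\Phi_i^k=\ell_{x,r}^k\ell_{y,s}^k$ is the tensor-product Lagrange cardinal function, so $\Phi_i^k(\mathbf x_j)=\delta_{ij}$: the factors $\ell_{x,r}^k(x_{r'}^k)=\delta_{rr'}$ and $\ell_{y,s}^k(y_{s'}^k)=\delta_{ss'}$, and distinct grid points differ in at least one local index.

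Combining the two cases gives
\[
B_i(\mathbf x_j)=\delta_{ij}\sum_{k\in K_i\cap K_j}\mathcal W_k(\mathbf x_j).
\]
If $i\neq j$ this is $0$. If $i=j$, then $K_i\cap K_j=K_j$, and P2 gives $\sum_{k\in K_j}\mathcal W_k(\mathbf x_j)=1$. Hence $B_i(\mathbf x_j)=\delta_{ij}$.

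Finally,
\[
\mathcal G[u,\mathcal X](\mathbf x_j)=\sum_\lambda u_\lambda B_\lambda(\mathbf x_j)=u_j.
\]
The main obstacle is only the rigorous treatment of the removable singularity of the weights at the nodes, which is handled by the normalization argument above.
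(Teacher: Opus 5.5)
Your proof is essentially the paper's. You discard the subgrids not containing $\mathbf x_j$ because $\mathcal W_k(\mathbf x_j)=0$ (P1), use the tensor-product Lagrange cardinality $\Phi_i^k(\mathbf x_j)=\delta_{ij}$ on the remaining subgrids, and finish with the partition of unity restricted to $K_j$. The interpolation property is then read off from $\mathcal G[u,\mathcal X]=\sum_\lambda u_\lambda B_\lambda$.

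There is one slip in your optional verification of P1. Each $\omega_l$ with $l\in K_j$ contains the node $\mathbf x_j$ exactly once, so it blows up only like $\|\mathbf x-\mathbf x_j\|^{-\mu}$, and the correct normalizing factor is the single power $\|\mathbf x-\mathbf x_j\|^{\mu}$. The factor you wrote is $\prod_{l\in K_j}\|\mathbf x-\mathbf x_j\|^{\mu}=\|\mathbf x-\mathbf x_j\|^{\mu|K_j|}$. It drives the denominator to $0$ whenever $|K_j|\ge 2$, which is the typical case with overlapping subgrids. So your ``positive limit'' claim is false as written, although the conclusion $\mathcal W_k(\mathbf x_j)=0$ still holds.

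With the single power, the denominator tends to $\sum_{l\in K_j}\prod_{\mathbf x'\in\mathcal X_l\setminus\{\mathbf x_j\}}\|\mathbf x_j-\mathbf x'\|^{-\mu}>0$. Both $\mathcal W_k(\mathbf x_j)=0$ for $k\notin K_j$ and $\sum_{k\in K_j}\mathcal W_k(\mathbf x_j)=1$ then follow exactly as you intended.
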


\begin{proof}
At a node $\mathbf x_j$, only the subgrids containing $\mathbf x_j$ contribute, because
$\mathcal W_k(\mathbf x_j)=0$ otherwise. Hence
\[
B_i(\mathbf x_j)
=
\sum_{k:\,\mathbf x_i,\mathbf x_j\in\mathcal X_k}
\Phi_i^{(k)}(\mathbf x_j)\mathcal W_k(\mathbf x_j)
=
\delta_{ij}
\sum_{k:\,\mathbf x_j\in\mathcal X_k}\mathcal W_k(\mathbf x_j).
\]
The partition of unity gives
\[
\sum_{k:\,\mathbf x_j\in\mathcal X_k}\mathcal W_k(\mathbf x_j)=1,
\]
and therefore $B_i(\mathbf x_j)=\delta_{ij}$. The interpolation property follows from the representation
$\mathcal G[u,\mathcal X]=\sum_i u_iB_i$.
\end{proof}
The following proposition states the polynomial reproduction property of the GBMSC operator $\mathcal G$.
\begin{proposition}
Let $q\in\mathbb P_{m_x,m_y}$, where
\[
\mathbb P_{m_x,m_y}
=\mathbb P_{m_x}\otimes \mathbb P_{m_y}=
\operatorname{span}\{x^\alpha y^\beta:\ 0\le \alpha\le m_x,\ 0\le \beta\le m_y\}.
\]
For each subgrid $\mathcal X_k$ of size $n_x\times n_y$, with $n_x=m_x+1$ and $n_y=m_y+1$, we have
\[
\mathcal I_k[q](\mathbf x)=q(\mathbf x),
\qquad k=1,\ldots,M.
\]
Hence,
\[
\mathcal G[q,\mathcal X](\mathbf x)=q(\mathbf x),
\qquad \mathbf x\in\mathcal D.
\]
\end{proposition}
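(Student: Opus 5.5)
The proof has two steps: exactness of each local tensor-product interpolant on $\mathbb P_{m_x,m_y}$, followed by the partition of unity \eqref{eq:Punity_rew}. By linearity of $\mathcal I_k$ it suffices to treat monomials $q(x,y)=x^\alpha y^\beta$ with $0\le\alpha\le m_x$ and $0\le\beta\le m_y$. Along the $x$-direction, the local nodes $\{x_r^k\}_{r=1}^{n_x}$ are $n_x=m_x+1$ distinct points, because the grid abscissae are strictly increasing. One-dimensional Lagrange interpolation on these nodes therefore reproduces every polynomial of degree at most $m_x$, which gives
\[
\sum_{r=1}^{n_x}(x_r^k)^\alpha\,\ell_{x,r}^k(x)=x^\alpha .
\]
The same argument in $y$ gives
\[
\sum_{s=1}^{n_y}(y_s^k)^\beta\,\ell_{y,s}^k(y)=y^\beta .
\]

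Since $\Phi_{r,s}^k=\ell_{x,r}^k\ell_{y,s}^k$ and $u^k_{r,s}=(x_r^k)^\alpha(y_s^k)^\beta$, the double sum in \eqref{eq:Loca_Int_rew} factorizes into the product of the two one-dimensional sums above. Hence $\mathcal I_k[q]=x^\alpha y^\beta=q$ on all of $\mathbb R^2$ for every $k$. An equivalent route is uniqueness: $\mathcal I_k[q]$ lies in $\mathbb P_{m_x,m_y}$ and agrees with $q$ at the $n_xn_y$ nodes of $\mathcal X_k$. The tensor-product Lagrange problem on a Cartesian grid is unisolvent for $\mathbb P_{m_x}\otimes\mathbb P_{m_y}$, since its basis $\Phi^k_{r,s}$ is cardinal and has exactly $\dim\mathbb P_{m_x,m_y}$ elements. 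It follows that $\mathcal I_k[q]=q$.

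For the global statement, substitute this into \eqref{eq:Oper_g_rew} to get $\mathcal G[q,\mathcal X](\mathbf x)=q(\mathbf x)\sum_k\mathcal W_k(\mathbf x)$, and then apply \eqref{eq:Punity_rew}. The only delicate point is at grid nodes $\mathbf x\in\mathcal X$, where the raw weights $\omega_k$ are singular and \eqref{eq:weights_raw} must be understood through its limiting (continuous) extension. In that case I will use property P2, namely $\sum_{k:\mathbf x_i\in\mathcal X_k}\mathcal W_k(\mathbf x_i)=1$, together with P1, which says the remaining weights vanish. This gives the identity at nodes as well; alternatively, it follows from Proposition 2, because $\mathcal G$ interpolates $q$ there. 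Thus $\mathcal G[q,\mathcal X]=q$ on $\mathcal D$, and I expect no real obstacle beyond this handling of the nodal singularity.
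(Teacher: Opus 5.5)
Your proof is correct and follows the paper's argument: reduce to monomials by linearity, use exactness of one-dimensional Lagrange interpolation in each direction, combine the two through the tensor-product structure, and finish with the partition of unity. Your explicit factorization of the double sum, the alternative unisolvence argument, and the treatment of grid nodes via $\mathrm{P1}$/$\mathrm{P2}$ add rigor but do not change the route; the paper skips the nodal case because it states the partition of unity for every $\mathbf x\in\mathbb R^2$.
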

\begin{proof}
Let $q\in\mathbb P_{m_x,m_y}$. By definition, $q$ can be written as
\[
q(x,y)=\sum_{\alpha=0}^{m_x}\sum_{\beta=0}^{m_y}
c_{\alpha\beta}x^\alpha y^\beta .
\]
The local interpolant $\mathcal I_k[q]$ is the tensor product of the one-dimensional Lagrange interpolation operators on the $x$- and $y$-nodes of $\mathcal X_k$. Since the one-dimensional interpolation in the $x$-direction is exact on $\mathbb P_{m_x}$, and the one-dimensional interpolation in the $y$-direction is exact on $\mathbb P_{m_y}$, their tensor product is exact on every monomial $x^\alpha y^\beta$, with
\[
0\leq \alpha\leq m_x,\qquad 0\leq \beta\leq m_y.
\]
By linearity,
\[
\mathcal I_k[q](\mathbf x)=q(\mathbf x),
\qquad k=1,\ldots,M.
\]
Therefore, using the definition of the operator $\mathcal G$ and the partition of unity property,
\[
\mathcal G[q,\mathcal X](\mathbf x)
=
\sum_{k=1}^M \mathcal W_k(\mathbf x)\mathcal I_k[q](\mathbf x)
=
q(\mathbf x)\sum_{k=1}^M\mathcal W_k(\mathbf x)
=
q(\mathbf x).
\]
\end{proof}
The next proposition establishes the consistency of the differentiated basis functions of the operator $\mathcal{G}$.
\begin{proposition}
For every
$q\in\mathbb P_{m_x,m_y}$, we have
\[
\partial_x\mathcal G[q,\mathcal X]=\partial_x q,\qquad
\partial_y\mathcal G[q,\mathcal X]=\partial_y q,
\]
and
\[
\partial_{xx}\mathcal G[q,\mathcal X]=\partial_{xx}q,\qquad
\partial_{xy}\mathcal G[q,\mathcal X]=\partial_{xy}q,\qquad
\partial_{yy}\mathcal G[q,\mathcal X]=\partial_{yy}q.
\]

\end{proposition}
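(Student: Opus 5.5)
The plan is to deduce the statement directly from the preceding polynomial reproduction proposition, and then to give a second, basis-level argument that explains why the assembled derivative matrices are consistent.

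\emph{Step 1: pointwise identity.} By the previous proposition, $\mathcal G[q,\mathcal X](\mathbf x)=q(\mathbf x)$ for every $\mathbf x\in\mathcal D$, and in fact for every $\mathbf x\in\mathbb R^2$ away from the nodes. This is because $\mathcal I_k[q]=q$ holds identically as polynomials and $\sum_k\mathcal W_k\equiv1$ on $\mathbb R^2$.

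\emph{Step 2: regularity.} Off the nodes, each $\omega_k$ is $C^\infty$, and the denominator $\sum_l\omega_l$ is positive. Near a node $\mathbf x_i$, write
\[
\mathcal W_k=\frac{\omega_k\|\mathbf x-\mathbf x_i\|^{\mu}}{\sum_l\omega_l\|\mathbf x-\mathbf x_i\|^{\mu}}.
\]
The factor $\|\mathbf x-\mathbf x_i\|^{\mu}$ cancels the singular term in every $\omega_l$ with $\mathbf x_i\in\mathcal X_l$, and makes the other terms vanish to order $\mu>2$. So $\mathcal W_k$ is $C^2$ near $\mathbf x_i$, which is what underlies P1 and P2. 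Hence $\mathcal G[q,\mathcal X]$ is $C^2$ on $\mathcal D$.

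\emph{Step 3: differentiate the identity.} Since $\mathcal G[q,\mathcal X]-q\equiv0$ on an open neighbourhood of every point of $\mathcal D$ (or one-sidedly at the boundary), all its first and second partial derivatives vanish there. This gives the five identities.

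As a consistency check matching the numerical assembly, I would also verify the identities directly using the product rule:
\[
\partial_x\mathcal G[q,\mathcal X]=\sum_k\bigl(\partial_x\mathcal W_k\,\mathcal I_k[q]+\mathcal W_k\,\partial_x\mathcal I_k[q]\bigr)
=q\,\partial_x\Bigl(\sum_k\mathcal W_k\Bigr)+\partial_xq\sum_k\mathcal W_k=\partial_xq,
\]
using $\mathcal I_k[q]=q$ and the fact that $\sum_k\mathcal W_k\equiv1$ has zero derivatives. The second-order terms are handled in the same way. The expansion of $\partial_{xy}(\mathcal W_k\mathcal I_k[q])$ contains terms in $\partial_{xy}\mathcal W_k$, $\partial_x\mathcal W_k\,\partial_y q$, $\partial_y\mathcal W_k\,\partial_x q$ and $\mathcal W_k\,\partial_{xy}q$. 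Summing over $k$, every term involving a derivative of $\mathcal W_k$ vanishes by partition of unity. At the nodes themselves this is exactly P2, i.e.\ $\sum\nabla\mathcal W_k=0$ and $\sum H\mathcal W_k=0$.

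The main obstacle is Step 2: justifying differentiability of $\mathcal W_k$ at the grid nodes, where $\omega_k$ is singular. I would settle it by the renormalization above, noting that $\mu>2$ gives enough decay for the Hessian to exist and vanish as P1 states. Everything else is immediate from the previous proposition.
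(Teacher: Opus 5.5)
Your proposal is correct and is essentially the paper's argument. Like the paper, you use $\mathcal I_k[q]=q$ and $\sum_k\mathcal W_k\equiv 1$ to get $\mathcal G[q,\mathcal X]\equiv q$, then differentiate this identity using twice-differentiability of the weights. Your renormalization by $\|\mathbf x-\mathbf x_i\|^{\mu}$ with $\mu>2$ justifies the $C^2$ regularity at the nodes, which the paper only asserts. Strictly, though, once $\mathcal G[q,\mathcal X]\equiv q$ holds on an open set the derivative identities follow automatically, so that step matters mainly for your product-rule check, which mirrors how $\mathcal M_x,\mathcal M_{xx},\ldots$ are assembled.
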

\begin{proof}
Let $q\in\mathbb P_{m_x,m_y}$. By the polynomial reproduction property of the local tensor-product interpolants,
\[
\mathcal I_k[q](\mathbf x)=q(\mathbf x),
\qquad k=1,\ldots,M.
\]
Hence, by the definition of the GBMSC operator,
\[
\mathcal G[q,\mathcal X](\mathbf x)
=
\sum_{k=1}^M \mathcal W_k(\mathbf x)\mathcal I_k[q](\mathbf x)
=
q(\mathbf x)\sum_{k=1}^M\mathcal W_k(\mathbf x)
=
q(\mathbf x).
\]
Thus, $\mathcal G[q,\mathcal X]$ and $q$ coincide at every point where the weights are defined. Since the weights are differentiable twice at $\mathbf x$, the function $\mathcal G[q,\mathcal X]$ is twice differentiable at $\mathbf x$. Differentiating the identity
\[
\mathcal G[q,\mathcal X]=q
\]
completes the proof.
\end{proof}

\section{Application to the Solution of Linear Elliptic PDEs}
In order to highlight the application of the GBMSC method, in this section we deal with linear partial differential equations. We consider steady-state elliptic problems. Within this category, we have selected four classical benchmarks, each endowed with an analytical solution, so that we can rigorously evaluate convergence rates. In the sections that follow, we describe these test cases in detail and present our numerical results to illustrate the capacity of the GBMSC framework to achieve high levels of accuracy.

We are interested in constructing an approximation of the solution to the following elliptic problem:
\begin{equation}
    (P_s): \begin{cases}
        \mathcal{L}u &= f \quad \text{in } \mathcal{D}, \\
        \mathcal{B}u &= g \quad \text{on } \partial\mathcal{D},
    \end{cases} 
    \label{eq:PbLimites} 
\end{equation}
where $\mathcal{D}$ is a bounded rectangular spatial domain, $\mathcal{L}$ is an elliptic linear differential operator and $\mathcal{B}$ is a linear operator describing the boundary conditions.

The exact solution $u$ of ($P_s$) is approximated using the GBMSC method by Formula \ref{eqUwithsinglsum_rew} as 
\begin{equation}
    u(\mathbf{x})\approx \widetilde{u}(\mathbf{x})
=
\sum_{j=1}^{N}\widetilde{u}_j\,B_j(\mathbf{x}).\label{eq:approsol}
\end{equation}

Substituting the approximate solution \ref{eq:approsol} into the boundary value problem \ref{eq:PbLimites} and enforcing the equation at all collocation nodes yields
\begin{equation}
   \begin{cases}
\mathcal{L}\widetilde{u}(\mathbf{x}_i)=f(\mathbf{x}_i), & i\in I,\\
\mathcal{B}\widetilde{u}(\mathbf{x}_i)=g(\mathbf{x}_i), & i\in B,
\end{cases} \label{eq:coloPbLimites} 
\end{equation}
where $I$ and $B$ denote the index sets for the interior and boundary nodes, respectively, defined by:
\[I =\{ k\in \{1,\cdots,N\}: \mathbf{x}_k\in\mathcal D\}, \text{ and }  
B=\{k\in \{1,\cdots,N\}: \mathbf{x}_k\in\partial\mathcal D\}.\]

Since the operators $\mathcal{L}$ and $\mathcal{B}$ are linear, the system of linear equations \ref{eq:coloPbLimites} can be  rewritten in matrix form as \[\mathcal{A}\widetilde{\mathbf{U}} = \mathcal{F},\] where  $\widetilde{\mathbf{U}} = [\widetilde{u}_1, \dots, \widetilde{u}_N]^T$ is the vector of unknown and  
\[
\mathcal{A}
=
\begin{bmatrix}
[\mathcal{L}B_j(\mathbf{x}_i)]_{i\in I,\; j=1,\dots,N}\\[4pt]
[\mathcal{B}B_j(\mathbf{x}_i)]_{i\in B,\; j=1,\dots,N}
\end{bmatrix},
\qquad
\mathcal{F}
=
\begin{bmatrix}
[f(\mathbf{x}_i)]_{i\in I}\\[2pt]
[g(\mathbf{x}_i)]_{i\in B}
\end{bmatrix}.
\]

In this paper, all proposed elliptic benchmark problems are treated in the rectangular domain $\mathcal{D}$. This domain is discretized by a uniform Cartesian grid of $N=N_x \times N_y$ nodes,  where these grid nodes serve as the collocation points $\{\mathbf{x}_i\}_{i=1}^{N}$.  
To ensure connectivity between local subgrids, we construct subgrids $\mathcal{X}_k$ of size $ n_x \times n_y$ that satisfy the overlapping conditions $\eta_x$ and $\eta_y$. The parameters $\eta_x$ and $\eta_y$ control the interaction between neighboring local interpolants and play an important role in the compromise between locality and stability.

For the numerical solution of elliptic problems, the derivatives of the basis functions $B_j$ are not assembled by differentiating the global representation directly, but by exploiting the local tensor-product structure of the interpolants and the derivatives of the partition functions. This yields a sparse discrete operator and makes the implementation efficient. We set $\mu = 4.01$ to strictly control and stabilize the behavior of $B_j$. The multinode Shepard exponent used in all numerical experiments is \(\mu=4.01\). This corrects the value reported in the first version. The numerical results are unchanged.

A delicate aspect in the construction of the weights $\mathcal{W}_k$ is their numerical evaluation. The formula \ref{eq:weights_raw} can produce very large or very small intermediate quantities. In the implementation, this issue is handled by a min-max logarithmic stabilization of the raw weights.

More precisely, we first compute
\[
\lambda_k(\mathbf{x})=\log \omega_k(\mathbf{x}),
\]
and then shift the logarithms with respect to the maximum value
\[
\lambda_{\max}(\mathbf{x})=\max_{1\le k\le M}\lambda_k(\mathbf{x}).
\]
If necessary, the admissible logarithmic range is further truncated by a min--max rule before exponentiation. The stabilized quantities are therefore reconstructed as
\[
\widetilde{\omega}_k(\mathbf{x})
=
\exp\bigl(\widetilde{\lambda}_k(\mathbf{x})-\lambda_{\max}(\mathbf{x})\bigr),
\]
where $\widetilde{\lambda}_k$ denotes the logarithmic value of the clipped value, and the final normalized weights are given by
\[
\mathcal{W}_k(\mathbf{x})
=
\frac{\widetilde{\omega}_k(\mathbf{x})}
{\sum_{l=1}^M \widetilde{\omega}_l(\mathbf{x})}.
\]
This procedure avoids overflow and underflow, reduces the effect of extreme scale separation among the raw weights, and yields a more regular partition of unity. In particular, it prevents an excessive concentration of the weighting on a single subgrid, as well as an excessively uniform redistribution over too many subgrids. From a numerical point of view, this balance is important in order to avoid both overlocalized and overdiffused behavior of the global approximant.

Let $\mathcal{X}_k$ be a local subgrid of size $n_x\times n_y$, let \[
\mathcal J_k=\{(r,s):\ r=1,\dots,n_x,\ \ s=1,\dots,n_y\},
\] denote the set of pairs of local indices of $\mathcal{X}_k$, and let $(\alpha,\beta)$ and $(r,s)$ be two pairs of local indices $\mathcal J_k$, respectively, corresponding to an evaluation node and a source node. We define the one-dimensional differentiation matrices
\[
(L_{q,x}^{(k)})_{\alpha r}
=
\bigl(\ell_{x,r}^{k}\bigr)^{(q)}(x_{\alpha}),
\qquad
(L_{q,y}^{(k)})_{\beta s}
=
\bigl(\ell_{y,s}^{k}\bigr)^{(q)}(y_{\beta}),
\qquad q=0,1,2.
\]
In particular, $L_{0,x}^{(k)}$ and $L_{0,y}^{(k)}$ are the identity matrices. Moreover, for the weight function $\mathcal{W}_k$, we denote its values and derivatives at the local evaluation node $(x_{\alpha},y_{\beta})$ by
\[
\mathbf{W}^{(k)}_{\alpha\beta}= \mathcal{W}_{k}(x_\alpha,y_\beta),
\qquad
\mathbf{W}^{(k)}_{x,\alpha\beta}=\mathcal{W}_{x,k}(x_\alpha,y_\beta),
\qquad
\mathbf{W}^{(k)}_{y,\alpha\beta}=\mathcal{W}_{y,k}(x_\alpha,y_\beta),\]
\[
\mathbf{W}^{(k)}_{xx,\alpha\beta}=\mathcal{W}_{xx,k}(x_\alpha,y_\beta),
\qquad
\mathbf{W}^{(k)}_{yy,\alpha\beta}=\mathcal{W}_{yy,k}(x_\alpha,y_\beta).
\]

The local contributions to the first- and second-order differentiation matrices are then
\[
\bigl(\mathcal M_x^{(k)}\bigr)_{(\alpha,\beta),(r,s)}
=
\delta_{\beta s}
\left(
\mathbf{W}^{(k)}_{x,\alpha\beta}(L_{0,x}^{(k)})_{\alpha r}
+
\mathbf{W}^{(k)}_{\alpha\beta}(L_{1,x}^{(k)})_{\alpha r}
\right),
\]
\[
\bigl(\mathcal M_{xx}^{(k)}\bigr)_{(\alpha,\beta),(r,s)}
=
\delta_{\beta s}
\left(
\mathbf{W}^{(k)}_{xx,\alpha\beta}(L_{0,x}^{(k)})_{\alpha r}
+
2\mathbf{W}^{(k)}_{x,\alpha\beta}(L_{1,x}^{(k)})_{\alpha r}
+
\mathbf{W}^{(k)}_{\alpha\beta}(L_{2,x}^{(k)})_{\alpha r}
\right),
\]
\[
\bigl(\mathcal M_y^{(k)}\bigr)_{(\alpha,\beta),(r,s)}
=
\delta_{\alpha r}
\left(
\mathbf{W}^{(k)}_{y,\alpha\beta}(L_{0,y}^{(k)})_{\beta s}
+
\mathbf{W}^{(k)}_{\alpha\beta}(L_{1,y}^{(k)})_{\beta s}
\right),
\]
\[
\bigl(\mathcal M_{yy}^{(k)}\bigr)_{(\alpha,\beta),(r,s)}
=
\delta_{\alpha r}
\left(
\mathbf{W}^{(k)}_{yy,\alpha\beta}(L_{0,y}^{(k)})_{\beta s}
+
2\mathbf{W}^{(k)}_{y,\alpha\beta}(L_{1,y}^{(k)})_{\beta s}
+
\mathbf{W}^{(k)}_{\alpha\beta}(L_{2,y}^{(k)})_{\beta s}
\right),
\]
 where $\delta$ is the Kronecker delta. These formulas are the discrete equivalent of the product rule and explicitly show how the derivatives of the local interpolants interact with the derivatives of the weighting functions.

The corresponding global sparse matrices are obtained by summing all local contributions associated with subgrids containing the pair of global nodes under consideration. Thus, if $i$ and $j$ are global indices,
\[
(\mathcal M_{\star})_{ij}
=
\sum_{k:\, \pi_k(i),\pi_k(j)\in \mathcal J_k}
\bigl(\mathcal M_{\star}^{(k)}\bigr)_{\pi_k(i),\pi_k(j)},
\qquad
\star\in\{x,xx,y,yy\},
\]
where $\pi_k(i)$ denotes the pair of the local index in $\mathcal J_k$ of the global node $i$.

The matrices $\mathcal M_x$, $\mathcal M_y$, $\mathcal M_{xx}$ and $\mathcal M_{yy}$ are nodal differential operators acting on the vector of unknown nodal coefficients
\[
\widetilde{\mathbf U}
=
(\widetilde u_1,\ldots,\widetilde u_N)^T .
\]
More precisely, their action provides the discrete approximations
\[
\mathcal M_x\widetilde{\mathbf U}\simeq
\bigl(\partial_x \widetilde u(\mathbf{x}_i)\bigr)_{i=1}^N,
\qquad
\mathcal M_y\widetilde{\mathbf U}\simeq
\bigl(\partial_y \widetilde u(\mathbf{x}_i)\bigr)_{i=1}^N,
\]
and
\[
\mathcal M_{xx}\widetilde{\mathbf U}\simeq
\bigl(\partial_{xx} \widetilde u(\mathbf{x}_i)\bigr)_{i=1}^N,
\qquad
\mathcal M_{yy}\widetilde{\mathbf U}\simeq
\bigl(\partial_{yy} \widetilde u(\mathbf{x}_i)\bigr)_{i=1}^N .
\]
Since the rows are evaluated at grid nodes and each row involves only the local subgrids containing the corresponding node, the resulting matrices are sparse. This sparsity is a direct consequence of the local tensor-product Lagrange structure and of the compact interaction induced by the selected overlapping subgrids.

The same discrete operators can be used to assemble more general linear elliptic equations of the form
\begin{equation}
\nabla\cdot(D\nabla u)+\mathbf v\cdot\nabla u+c\,u=f
\qquad\hbox{in }\mathcal D,
\label{eq:general-linear-elliptic-added}
\end{equation}
where
\[
D(\mathbf{x})
=
\begin{pmatrix}
D_x(\mathbf{x}) & 0\\
0 & D_y(\mathbf{x})
\end{pmatrix},
\qquad
\mathbf v(\mathbf{x})=(v_x(\mathbf{x}),v_y(\mathbf{x})),
\]
and $c$ is a reaction coefficient. By the product rule,
\[
\nabla\cdot(D\nabla u)
=
D_x\,\partial_{xx}u
+
(\partial_xD_x)\,\partial_xu
+
D_y\,\partial_{yy}u
+
(\partial_yD_y)\,\partial_yu .
\]
Let
\[
(\mathbf d_x)_i=D_x(\mathbf{x}_i),\qquad
(\mathbf d_y)_i=D_y(\mathbf{x}_i),\qquad
(\mathbf v_x)_i=v_x(\mathbf{x}_i),\qquad
(\mathbf v_y)_i=v_y(\mathbf{x}_i),\qquad
\mathbf c_i=c(\mathbf{x}_i).
\]
Approximating the derivatives of the diffusion coefficients by the same nodal operators gives the interior matrix
\[
\mathcal A_D
=
\operatorname{diag}(\mathbf d_x)\mathcal M_{xx}
+
\operatorname{diag}(\mathcal M_x\mathbf d_x)\mathcal M_x
+
\operatorname{diag}(\mathbf d_y)\mathcal M_{yy}
+
\operatorname{diag}(\mathcal M_y\mathbf d_y)\mathcal M_y .
\]
The convection and reaction contributions are
\[
\mathcal A_{\mathbf v}
=
\operatorname{diag}(\mathbf v_x)\mathcal M_x
+
\operatorname{diag}(\mathbf v_y)\mathcal M_y,
\qquad
\mathcal A_c=\operatorname{diag}(\mathbf c).
\]
Therefore, before imposing the boundary conditions, the discrete operator associated with \ref{eq:general-linear-elliptic-added} is
\[
\mathcal A_{\mathrm{int}}
=
\mathcal A_D+\mathcal A_{\mathbf v}+\mathcal A_c .
\]

Let
\[
\partial\mathcal D=\Gamma_{\mathrm{Dir}}\cup\Gamma_{\mathrm{Neu}}\cup\Gamma_{\mathrm{Rob}},
\qquad
\Gamma_{\mathrm{Dir}}\cap\Gamma_{\mathrm{Neu}}=\Gamma_{\mathrm{Dir}}\cap\Gamma_{\mathrm{Rob}}=\Gamma_{\mathrm{Neu}}\cap\Gamma_{\mathrm{Rob}}=\emptyset,
\]
where Dirichlet, Neumann, and Robin conditions are prescribed, respectively, in the form
\[
u=g_{\mathrm{Dir}} \quad \hbox{on } \Gamma_{\mathrm{Dir}},
\]
\[
\frac{\partial u}{\partial\mathbf n}
=
\nabla u\cdot \mathbf n
=
g_{\mathrm{Neu}}
\quad \hbox{on } \Gamma_{\mathrm{Neu}},
\]
and
\[
\alpha u
+
\beta\frac{\partial u}{\partial\mathbf n}
=
g_{\mathrm{Rob}}
\quad \hbox{on } \Gamma_{\mathrm{Rob}} .
\]
Here $\mathbf n=(n_x,n_y)^T$ denotes the outward unit normal to
$\partial\Omega$, while $\alpha$ and $\beta$ are prescribed boundary
coefficients.

Boundary conditions are imposed by replacing the corresponding rows of
the algebraic system. If $\mathbf x_i\in\Gamma_{\mathrm{Dir}}$, then the $i$-th row is
replaced by the coordinate row $\mathbf e_i^T$, where $\mathbf e_i$ is
the $i$-th vector of the canonical basis of $\mathbb R^N$, and the
right-hand side is set equal to $g_{\mathrm{Dir}}(\mathbf x_i)$. If
$\mathbf x_i\in\Gamma_{\mathrm{Neu}}$, with
$\mathbf n_i=(n_{x,i},n_{y,i})^T$, then the corresponding row is
\[
n_{x,i}(\mathcal M_x)_{i,:}
+
n_{y,i}(\mathcal M_y)_{i,:},
\]
and the right-hand side is $g_{\mathrm{Neu}}(\mathbf x_i)$. Thus the normal
derivative is discretized by the same nodal first-derivative matrices
used in the interior operator.

Finally, if $\mathbf x_i\in\Gamma_{\mathrm{Rob}}$, the Robin condition is imposed by
the row
\[
\alpha_i\mathbf e_i^T
+
\beta_i
\left(
n_{x,i}(\mathcal M_x)_{i,:}
+
n_{y,i}(\mathcal M_y)_{i,:}
\right),
\]
with right-hand side $g_{\mathrm{Rob}}(\mathbf x_i)$, where
\[
\alpha_i=\alpha(\mathbf x_i),
\qquad
\beta_i=\beta(\mathbf x_i).
\]
In the numerical tests below we use only Dirichlet and Neumann rows.

For the Poisson operator with constant diffusion,
\[
\Delta u=f
\qquad \text{in }\mathcal{D},
\]
and homogeneous Dirichlet data, the discrete stiffness matrix reduces, before the imposition of the boundary conditions, to
\[
\mathcal{A}=\mathcal M_{xx}+\mathcal M_{yy}.
\]
The boundary conditions are then enforced by replacing each boundary row with the corresponding row of the identity matrix. This operation preserves the sparse structure of the algebraic system.

The implementation is highly sparse, Algorithm~\ref{algo:Lapla}. The local contributions are first accumulated as triplets in COO format, then converted into CSR format for algebraic operations and for the solution step. Only the imposition of Dirichlet conditions requires a temporary conversion to the LIL format, which allows an efficient modification of the boundary rows. This strategy keeps both the storage cost and the assembly cost under control.

\begin{algorithm}
\caption{Sparse assembly of the Poisson stiffness matrix $\mathcal{A}$}
\label{algo:Lapla}
\begin{algorithmic}[1]
\Require Structured grid of size $N=N_x\times N_y$ and sampled subgrids $\{\mathcal{X}_k\}_{k=1}^{M}$ of size $n_x\times n_y$
\State Initialize the triplet lists $\mathcal{T}_{xx}$ and $\mathcal{T}_{yy}$
\For{$k=1,\dots,M$}
    \State Extract the global indices $(\mathbf{i}_1^{(k)},\dots,\mathbf{i}_{n_x\times n_y}^{(k)})$ of the pair indices of the nodes of $\mathcal{X}_k$
    \State Compute $L_{q,x}^{(k)}$ and $L_{q,y}^{(k)}$, $q=0,1,2$
    \State Evaluate the stabilized Shepard quantities $\mathbf{W}^{(k)}$, $\mathbf{W}_x^{(k)}$, $\mathbf{W}_y^{(k)}$, $\mathbf{W}_{xx}^{(k)}$, and $\mathbf{W}_{yy}^{(k)}$ at the nodes of $\mathcal{X}_k$
    \For{each local pair $((\alpha,\beta), (r,s))$ in $\mathcal{J}_k$}
        \State Compute $\bigl(\mathcal M_{xx}^{(k)}\bigr)_{(\alpha,\beta),(r,s)}$ and $\bigl(\mathcal M_{yy}^{(k)}\bigr)_{(\alpha,\beta),(r,s)}$
        \State Append $\left(\mathbf{i}_{\alpha\beta}^{(k)}, \mathbf{i}_{rs}^{(k)}, \bigl(\mathcal M_{xx}^{(k)}\bigr)_{(\alpha,\beta),(r,s)}\right)$ to $\mathcal{T}_{xx}$
        \State Append $\left(\mathbf{i}_{\alpha\beta}^{(k)}, \mathbf{i}_{rs}^{(k)}, \bigl(\mathcal M_{yy}^{(k)}\bigr)_{(\alpha,\beta),(r,s)}\right)$ to $\mathcal{T}_{yy}$
    \EndFor
\EndFor
\State Build $\mathcal M_{xx}$ and $\mathcal M_{yy}$ in COO format from $\mathcal{T}_{xx}$ and $\mathcal{T}_{yy}$
\State Convert $\mathcal M_{xx}$ and $\mathcal M_{yy}$ to CSR format
\State Form $\mathcal{A}=\mathcal M_{xx}+\mathcal M_{yy}$
\State Convert $\mathcal{A}$ to LIL format, replace each Dirichlet boundary row by the corresponding identity row, and convert back to CSR
\State \Return $\mathcal{A}$
\end{algorithmic}
\end{algorithm}

To rigorously quantify the accuracy of our method, the error is evaluated on an independent, highly refined Cartesian grid of $Z = 101 \times 101$ points, denoted as $\{\boldsymbol{\xi}_j\}_{j=1}^{Z}$. We define the pointwise error at these evaluation points as $e(\boldsymbol{\xi}_j) := u(\boldsymbol{\xi}_j) - \widetilde{u}(\boldsymbol{\xi}_j)$. We compute the mean absolute error ($e_{\text{mean}}$), the root mean square error ($e_{\text{RMS}}$), and the maximum absolute error ($e_{\text{max}}$) as follows:

\[e_{\text{mean}} = \frac{1}{Z} \sum_{j=1}^{Z} |e(\boldsymbol{\xi}_j)|, \quad e_{\text{RMS}} = \sqrt{\frac{1}{Z} \sum_{j=1}^{Z} |e(\boldsymbol{\xi}_j)|^2}, \quad \text{and} \quad e_{\text{max}} = \max_{1 \leq j \leq Z} |e(\boldsymbol{\xi}_j)|.\]

All computational implementations are written in Python, with \texttt{NumPy} library for vectorized array operations and \texttt{SciPy} library for sparse matrix assembly. The obtained linear system, $\mathcal{A}\widetilde{\mathbf{U}} = \mathcal{F}$, is resolved in double precision (\texttt{float64}) using a sparse direct solver (\texttt{scipy.sparse.linalg.spsolve}). The computational efficiency is assessed by wall-clock elapsed time. All computations are performed with \texttt{gbmsc-pde}~\cite{el_harrak_2026_gbmsc_pde},
a Python implementation of the Grid-Based Multinode Shepard Collocation method.

For each configuration, we report "CPU (s)" as the mean execution time computed over five independent runs. All numerical simulations were performed on a workstation equipped with an Intel Core i5-10600 processor operating at 3.10 GHz with 16 GB of RAM.

\subsection{Poisson Equation with Homogeneous Dirichlet Conditions}
We consider the following Poisson equation on the unit square domain $\mathcal{D} = (0, 1)^2$:
\begin{equation}\label{Eq.P1_elliptic_Pr}
    \left\{\begin{array}{ll}
         -\Delta u = f& \text{in }\mathcal{D}=(0,1)^2,\\
u=0& \text{on }\partial\mathcal{D}.  
         
    \end{array}\right.
\end{equation}

We perform the numerical validation using two exact solutions $u_1$ (see \cite{cangiani2017posteriori}) and $u_2$ (see \cite{mantzaflaris2015integration}) to Problem \ref{Eq.P1_elliptic_Pr}:
\begin{equation}\label{exactsolutions}
u_1(x, y) = 16x(1 - x)y(1 - y) \quad \text{and}\quad u_2(x, y) = \sin(\pi x)\,\sin(\pi y),
\end{equation}
which yields the two corresponding source terms $f_1$ and $f_2$ respectively:
\[
f_1(x, y) =32\left(x(1 - x)+y(1 - y)\right) \quad \text{and}\quad  f_2(x, y) =2\pi^2\sin(\pi x)\,\sin(\pi y).
\]
In this section, we take $N_x=N_y$ and $n_x=n_y$. In local subgrids $\mathcal X_k$, the GBMSC discretization uses local polynomials of degree $m$ in each variable, where $m=n_x-1$.

\subsubsection{Polynomial Reproduction}
To verify that the GBMSC method correctly reproduces polynomials, we take the Poisson problem \ref{Eq.P1_elliptic_Pr} with $u_1$ as the exact solution, a case where the method is expected to recover the solution without any approximation error. Table \ref{Tab:repreductionPoly} reports errors $e_{\max}$, $e_{\mathrm{mean}}$, and $e_{\mathrm{RMS}}$ together with condition number $\kappa$, for several resolutions of the grid.

\begin{table}[h]
\caption{Numerical results for Problem \ref{Eq.P1_elliptic_Pr} : errors and condition number $\kappa$ for the exact solution $u_1$ at various grid sizes $N_x$.}\label{Tab:repreductionPoly}
\centering
\begin{tabular}{ccrrrrr}
\toprule
$m$ & $\eta$ & $N_x$ & $e_{\text{max}}$ & $e_{\text{mean}}$ & $e_{\text{RMS}}$ & $\kappa$ \\
\midrule
\multirow{2}{*}{1} & \multirow{2}{*}{1} & 6 & 8.47e-01 & 3.68e-01 & 4.45e-01 & 1.56e+03 \\
 &  & 12 & 8.36e-01 & 3.64e-01 & 4.41e-01 & 1.10e+04 \\
\midrule
\midrule
\multirow{4}{*}{2} & \multirow{2}{*}{2} & 5 & 1.67e-15 & 2.19e-16 & 3.50e-16 & 1.55e+02 \\
 &  & 7 & 1.78e-15 & 4.24e-16 & 5.82e-16 & 4.37e+02 \\
\cmidrule{2-7}
 & \multirow{2}{*}{1} & 5 & 2.00e-15 & 3.38e-16 & 5.19e-16 & 2.03e+03 \\
 &  & 7 & 1.89e-15 & 3.61e-16 & 5.18e-16 & 6.62e+03 \\
\midrule
\midrule
\multirow{4}{*}{4} & \multirow{2}{*}{4} & 8 & 1.11e-15 & 2.16e-16 & 2.83e-16 & 7.76e+02 \\
 &  & 11 & 1.54e-14 & 4.36e-15 & 5.89e-15 & 1.96e+03 \\
\cmidrule{2-7}
 & \multirow{2}{*}{2} & 8 & 8.88e-16 & 1.99e-16 & 2.46e-16 & 5.83e+02 \\
 &  & 11 & 4.00e-15 & 1.03e-15 & 1.34e-15 & 1.41e+03 \\
\bottomrule
\end{tabular}
\end{table}

Table~\ref{Tab:repreductionPoly} shows the numerical accuracy based on values of $m$. For $m=1$, the method fails to reproduce the solution $u_1$ exactly, which is expected since a polynomial of degree 1 is insufficient to represent a solution of degree 2. However, once $m$ is increased to 2, errors drop to near-machine precision, regardless of the grid resolution or the overlap parameter $\eta$, which confirms that the GBMSC method correctly reproduces polynomials of degree up to $m$. The same behavior continues to the higher degree $m=4$, where the errors remain at the level of rounding errors, typically of order $10^{-15}$, and the condition numbers remain well within manageable limits, ranging from $10^2$ to $10^4$. It is also crucial to note that, as the grid is progressively refined, the condition number increases, but only moderately.

\subsubsection{Convergence Study}
The performance and stability of the GBMSC method are evaluated by solving Problem \ref{Eq.P1_elliptic_Pr} in $\mathcal{D}$, with the exact solution $u_2$ defined in \ref{exactsolutions}. The numerical tests are designed to examine the effect of the local degree $m$, the number of points on the grid $N_x=N_y$ in each direction, and the overlap parameter $\eta$, which together determine the precision, conditioning, and computational cost of the discretization.

\begin{table}[h]
\centering
\caption{Numerical results for Problem \ref{Eq.P1_elliptic_Pr} using exact solution $u_2$ for different values of the local degree $m$, ${m \in\{4,6,8\}}$.}\label{tab:scaling_by_rho}
\small
\begin{tabular}{cccrrrrrrr}
\toprule
$m$ & $\eta$ & $N_x$ & $e_{\text{max}}$ & $e_{\text{mean}}$ & $e_{\text{RMS}}$ & $\kappa$ & $\mathrm{CPU}$(s) & Sparsity (\%) \\
\midrule
\multirow{12}{*}{4} & \multirow{4}{*}{4} & 29 & 1.56e-06 & 7.29e-07 & 8.04e-07 & 2.54e+04 & 0.805 & 98.32 \\
 &  & 41 & 2.63e-07 & 9.88e-08 & 1.18e-07 & 6.16e+04 & 2.131 & 99.11 \\
 &  & 53 & 7.18e-08 & 2.55e-08 & 3.03e-08 & 1.18e+05 & 3.724 & 99.45 \\
 &  & 65 & 2.61e-08 & 9.80e-09 & 1.16e-08 & 1.98e+05 & 5.817 & 99.63 \\
\cmidrule{2-9}
 & \multirow{4}{*}{2} & 29 & 4.95e-06 & 1.74e-06 & 2.23e-06 & 1.80e+04 & 0.066 & 98.69 \\
 &  & 41 & 1.19e-06 & 4.32e-07 & 5.53e-07 & 4.36e+04 & 0.111 & 99.31 \\
 &  & 53 & 4.15e-07 & 1.55e-07 & 1.97e-07 & 8.38e+04 & 0.303 & 99.58 \\
 &  & 65 & 1.80e-07 & 6.84e-08 & 8.70e-08 & 1.40e+05 & 0.591 & 99.71 \\
\cmidrule{2-9}
 & \multirow{4}{*}{1} & 29 & 3.38e-05 & 1.36e-05 & 1.71e-05 & 1.62e+06 & 0.057 & 98.87 \\
 &  & 41 & 8.42e-06 & 3.29e-06 & 4.11e-06 & 4.02e+06 & 0.095 & 99.41 \\
 &  & 53 & 2.89e-06 & 1.15e-06 & 1.44e-06 & 7.82e+06 & 0.218 & 99.64 \\
 &  & 65 & 1.28e-06 & 5.03e-07 & 6.28e-07 & 1.32e+07 & 0.457 & 99.76 \\
\midrule
\midrule
\multirow{12}{*}{6} & \multirow{4}{*}{6} & 31 & 1.06e-08 & 7.50e-09 & 7.80e-09 & 3.40e+04 & 6.287 & 97.90 \\
 &  & 43 & 1.02e-09 & 7.47e-10 & 7.78e-10 & 7.81e+04 & 13.111 & 98.84 \\
 &  & 55 & 1.79e-10 & 1.33e-10 & 1.39e-10 & 1.46e+05 & 23.328 & 99.26 \\
 &  & 67 & 4.50e-11 & 3.38e-11 & 3.53e-11 & 2.40e+05 & 35.715 & 99.49 \\
\cmidrule{2-9}
 & \multirow{4}{*}{3} & 31 & 5.91e-08 & 2.75e-08 & 3.20e-08 & 9.78e+05 & 0.126 & 98.35 \\
 &  & 43 & 7.82e-09 & 3.43e-09 & 4.05e-09 & 2.31e+06 & 0.284 & 99.10 \\
 &  & 55 & 1.67e-09 & 7.29e-10 & 8.70e-10 & 4.35e+06 & 0.484 & 99.43 \\
 &  & 67 & 5.01e-10 & 2.13e-10 & 2.56e-10 & 7.21e+06 & 1.016 & 99.61 \\
\cmidrule{2-9}
 & \multirow{4}{*}{1} & 31 & 2.92e-07 & 1.20e-07 & 1.50e-07 & 1.30e+07 & 0.073 & 98.65 \\
 &  & 43 & 3.98e-08 & 1.61e-08 & 2.01e-08 & 3.15e+07 & 0.126 & 99.27 \\
 &  & 55 & 8.89e-09 & 3.57e-09 & 4.46e-09 & 6.02e+07 & 0.337 & 99.54 \\
 &  & 67 & 2.68e-09 & 1.07e-09 & 1.34e-09 & 1.00e+08 & 0.634 & 99.69 \\
\midrule
\midrule
\multirow{12}{*}{8} & \multirow{4}{*}{8} & 25 & 7.26e-10 & 5.04e-10 & 5.24e-10 & 3.56e+04 & 12.192 & 96.17 \\
 &  & 33 & 5.82e-11 & 4.00e-11 & 4.15e-11 & 7.08e+04 & 27.683 & 97.61 \\
 &  & 49 & 1.53e-12 & 9.98e-13 & 1.04e-12 & 1.91e+05 & 70.105 & 98.82 \\
 &  & 57 & 3.90e-13 & 1.70e-13 & 1.89e-13 & 2.80e+05 & 96.850 & 99.11 \\
\cmidrule{2-9}
 & \multirow{4}{*}{5} & 25 & 6.54e-10 & 2.48e-10 & 2.96e-10 & 4.20e+05 & 0.161 & 96.73 \\
 &  & 33 & 5.25e-11 & 1.88e-11 & 2.22e-11 & 8.70e+05 & 0.370 & 97.98 \\
 &  & 49 & 1.45e-12 & 5.87e-13 & 7.01e-13 & 2.41e+06 & 0.992 & 99.02 \\
 &  & 57 & 9.85e-13 & 3.17e-13 & 4.23e-13 & 3.55e+06 & 1.515 & 99.26 \\
\cmidrule{2-9}
 & \multirow{4}{*}{1} & 25 & 1.76e-08 & 7.60e-09 & 9.71e-09 & 6.54e+07 & 0.050 & 97.48 \\
 &  & 33 & 2.19e-09 & 7.83e-10 & 9.86e-10 & 1.50e+08 & 0.076 & 98.49 \\
 &  & 49 & 8.43e-11 & 3.19e-11 & 3.99e-11 & 4.46e+08 & 0.327 & 99.28 \\
 &  & 57 & 2.23e-11 & 9.10e-12 & 1.13e-11 & 6.67e+08 & 0.444 & 99.46 \\

\bottomrule
\end{tabular}
\end{table}

Table~\ref{tab:scaling_by_rho} reports the errors $e_{\max}$, $e_{\mathrm{mean}}$, and $e_{\mathrm{RMS}}$, together with the condition number $\kappa$, the CPU time, and the sparsity rate of the collocation matrix $\mathcal{A}$. For each fixed pair $(m,\eta)$, the errors decrease as $N_x$ increases, which confirms the convergence of the approximation under grid refinement. Moreover, the results show that the parameter $\eta$ has a direct influence on the balance between computational cost and numerical stability. Larger overlaps improve the stability of the discrete system, whereas smaller overlaps reduce assembly and resolution time. In all reported configurations, the matrix $\mathcal{A}$ remains highly sparse, which is consistent with the local character of the GBMSC construction.

Figure~\ref{fig:scaling_by_eta_emax} shows the log--log convergence curves of $e_{\mathrm{max}}$ as a function of $h={1}/({N_x-1}),$ for the local degrees $m=4$, $m=6$ and $m=8$, and for different values of the overlap parameter $\eta$. The plots show that increasing $\eta$ improves the observed convergence behavior. In particular, the curves associated with larger overlaps are closer to the expected order of convergence $p={m+1}$, whereas the minimal-overlap case $\eta=1$ gives slightly smaller empirical orders. This confirms that the overlap contributes not only to the stability of the algebraic system but also to the effective approximation order.

\begin{figure}
    %%% e_max
        \centering
         \begin{subfigure}[]{0.32\textwidth}
        \centering
        \includegraphics[width=\linewidth]{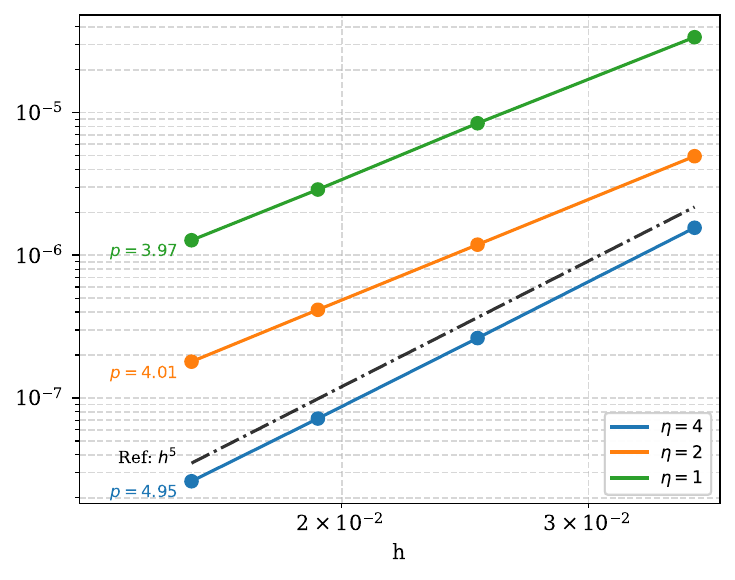}\\[-1mm]
            \caption{$m=4$}
      \end{subfigure}
      \begin{subfigure}[]{0.32\textwidth}
        \centering
    \includegraphics[width=\linewidth]{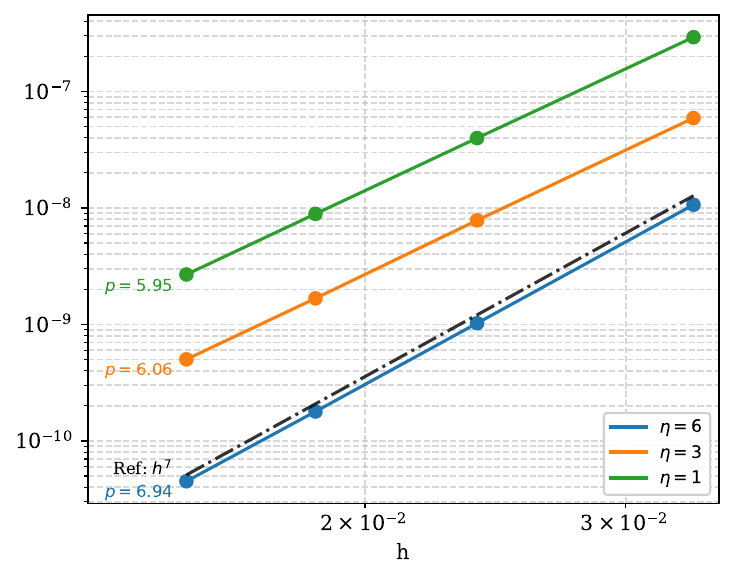}\\[-1mm]
     \caption{$m=6$}
      \end{subfigure}
      \begin{subfigure}[]{0.32\textwidth}
        \centering
    \includegraphics[width=\linewidth]{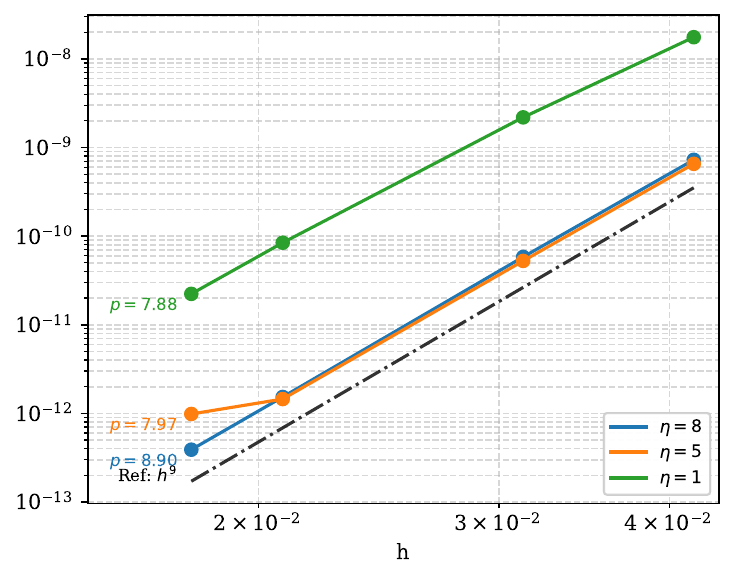}\\
     \caption{$m=8$}
      \end{subfigure}
  \caption{Log--log plots of the error $e_{\text{max}}$ against $h=1/(N_x-1)$ for Problem \ref{Eq.P1_elliptic_Pr} with exact solution $u_2$ across varying polynomial degree $m$ and overlap parameter $\eta$. The values of $p$ represent the empirical convergence order and are obtained by least-squares fitting of $e = C h^p$ on logarithmic data.}
  \label{fig:scaling_by_eta_emax}
\end{figure}

Figure~\ref{fig:wp_comparison} reports a comparison of the tested configurations. More precisely, $e_{\mathrm{mean}}$ is plotted against the CPU time in Figure~\ref{fig:wp_comparison}-(a) and against the condition number $\kappa$ in Figure~\ref{fig:wp_comparison}-(b), both in logarithmic scale. The resolution of the grid $N_x$ is not shown explicitly, but acts as a refinement parameter along each curve. For fixed $(m,\eta)$, increasing $N_x$ decreases the error and increases the computational cost. The same refinement also affects algebraic stability, as $\kappa$ generally increases as the number of collocation points increases.

The plots in Figure~\ref{fig:wp_comparison} indicate that maximal-overlap configurations $(m,m)$ are not always optimal when accuracy is compared with CPU time. Although they provide stable and accurate approximations, their cost is higher. Their position in the error--condition-number diagram also shows that the gain in stability does not always compensate for the additional cost of the larger overlap. The most favorable compromise is obtained by high-degree local approximations with small or moderate overlap. In particular, the configurations $(8,1)$ and $(8,5)$ are located near the most favorable region and generate small errors at a moderate computational cost. Among them, $(8,5)$ gives a more conservative choice when conditioning is also taken into account, whereas $(8,1)$ is preferable when CPU time is the dominant constraint. Thus, for the present Poisson test, high-degree GBMSC discretizations with controlled overlap provide the most efficient choices among the tested configurations.

\begin{figure}
	\centering
     \begin{subfigure}[b]{0.4\textwidth}
    \centering\includegraphics[width=\linewidth]{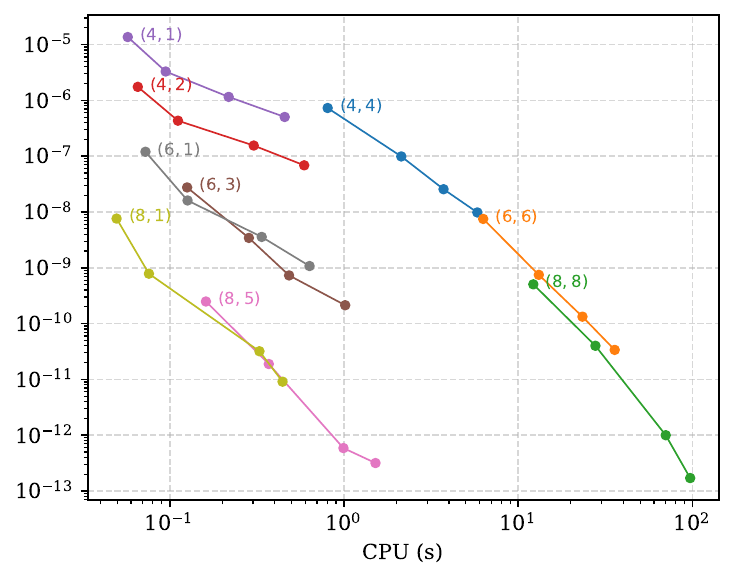}\\[-1mm]
    {\footnotesize (a) $e_{\text{mean}}$ vs CPU}
  \end{subfigure}
  \hspace{0.5cm}
  \begin{subfigure}[b]{0.4\textwidth}
    \centering
    \includegraphics[width=\linewidth]{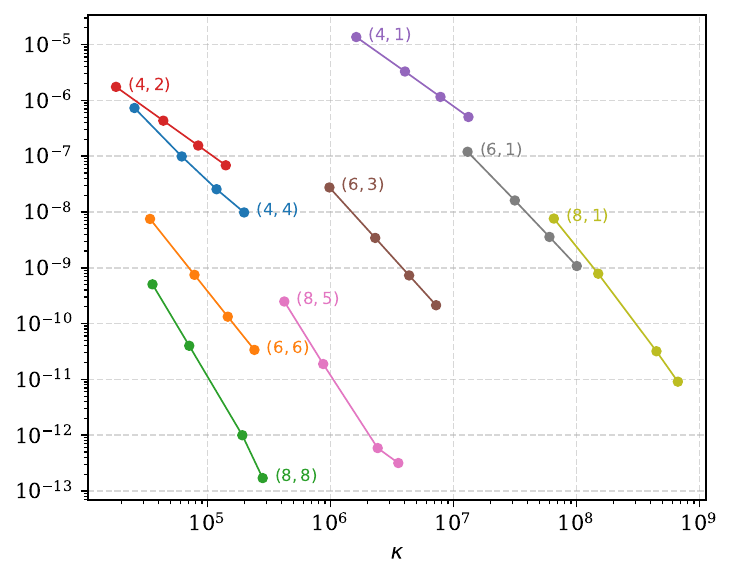}\\[-1mm]
    {\footnotesize (b) $e_{\text{mean}}$ vs $\kappa$}
  \end{subfigure}
  \caption{log-log plots of the error $e_{\text{mean}}$ against CPU, (a), and Condition Number $\kappa$, (b), for Problem \ref{Eq.P1_elliptic_Pr} with exact solution $u_2$. We use the notation $(m,\eta)$; $m$ is the local degree and $\eta$ the overlap parameter.}
  \label{fig:wp_comparison}
\end{figure}

\subsection{Poisson Equation with Mixed Boundary Conditions}
In this second example, we evaluate the accuracy of the numerical scheme for the mixed boundary condition problem of the Poisson equation, as studied in \cite{dell2024multinode, fasshauer2007meshfree}. Here, we consider the following problem on the unit square
$\mathcal D=(0,1)^2$,
\begin{equation}
\label{eq:poisson-mixed-model}
\left\{
\begin{array}{ll}
-\Delta u=f, & \hbox{in } \mathcal D,\\[1mm]
u=g_{\text{Dir}}, & \hbox{on } \Gamma_{\text{Dir}},\\[1mm]
\dfrac{\partial u}{\partial n}=g_{\text{Neu}}, & \hbox{on } \Gamma_{\text{Neu}},
\end{array}
\right.
\end{equation}
where $\partial\mathcal D=\Gamma_{\text{Dir}}\cup\Gamma_{\text{Neu}}$.

For anisotropic discretizations, we use Cartesian grids of size $N_x\times N_y$. The local GBMSC approximation is constructed on subgrids $\mathcal X_k$ of size $n_x\times n_y$. The corresponding local degrees are
\[
m_x=n_x-1\quad\text{and}\quad m_y=n_y-1.
\]
The density of the local subgrids is controlled by the overlap parameters $\eta_x$ and $\eta_y$. This anisotropic construction is useful when the solution has different polynomial structure in the two variables, or when the boundary conditions require a different resolution in the two directions.

In addition to the errors $e_{\max}$, $e_{\mathrm{mean}}$, and $e_{\mathrm{RMS}}$, we measure the accuracy of the imposed Neumann condition. More precisely, on $\Gamma_{\text{Neu}}$ we compute the error
\[
e_{{\text{Neu}}}
=
\left(
\int_{\Gamma_{\text{Neu}}}
\left(
\dfrac{\partial u}{\partial n}-g_{\text{Neu}}
\right)^2\,ds
\right)^{1/2}.
\]
This quantity provides a direct measure of the boundary flux error and complements the pointwise error indicators computed in the domain.

\subsubsection{Problem with Polynomial Solution}
We consider the polynomial manufactured solution $u_3$ \cite{dell2024multinode, fasshauer2007meshfree} of Problem \ref{eq:poisson-mixed-model}:
\[
u_3(x,y)=1-0.9\,x^3,
\]
that yields the source term $f(x,y)= 5.4\,x$ and the functions $g_{\text{Neu}} = 0$ and $g_{\text{Dir}}=u_3\vert_{\Gamma_{\text{Dir}}}$, with
\[
\Gamma_{\text{Dir}}=\{x=0\ \hbox{or}\ x=1\}
\quad\text{and}\quad
\Gamma_{\text{Neu}}=\{y=0\ \hbox{or}\ y=1\}.
\]

Since $u_3$ is independent of $y$ and is a cubic polynomial in $x$, the anisotropic choice $(m_x,m_y)=(3,1)$ is sufficient to reproduce the exact solution in the present setting. Table~\ref{tab:poissonA_rho} confirms the expected polynomial reproduction property. The errors in the domain remain at the round-off level for all tested grids and overlap parameters. The flux error $e_{{\text{Neu}}}$ has the same order of magnitude; hence, the Neumann condition is also reproduced with the expected accuracy. This behavior is consistent with the structure of $u_3$, since the exact normal derivative on $\Gamma_{\text{Neu}}$ is identically zero.

\begin{table}[h]
\centering
\small
\caption{Polynomial reproduction test for the mixed problem \ref{eq:poisson-mixed-model} with exact solution $u_3$. The anisotropic local degree is fixed at $(m_x,m_y)=(3,1)$, while the grid sizes and overlap parameters are varied.}
\label{tab:poissonA_rho}
\begin{tabular}{cccccccccc}
\toprule
$N_x$ & $N_y$ & $\eta_x$ & $\eta_y$
& $e_{\max}$ & $e_{\mathrm{mean}}$ & $e_{\mathrm{RMS}}$
& $e_{{\text{Neu}}}$ & $\kappa$ & Sparsity (\%) \\
\midrule
6 & 5 & 2 & 1 & 1.45e-14 & 6.24e-15 & 7.67e-15 & 6.33e-15 & 3.21e+03 & 87.77 \\
7 & 5 & 3 & 1 & 1.67e-15 & 3.40e-16 & 4.59e-16 & 5.25e-15 & 1.76e+04 & 89.47 \\
10 & 5 & 3 & 1 & 1.55e-15 & 5.30e-16 & 6.14e-16 & 7.54e-15 & 4.08e+04 & 91.84 \\
13 & 5 & 3 & 1 & 2.00e-15 & 6.92e-16 & 8.05e-16 & 6.32e-15 & 7.18e+04 & 93.39 \\
\bottomrule
\end{tabular}
\end{table}

The values of $\kappa$ increase with refinement in the $x$-direction, but remain moderate for this problem. The percentage of sparsity also increases with $N_x$, reflecting the local character of the assembly. Thus, the anisotropic GBMSC construction gives both exact polynomial reproduction up to round-off and a sparse algebraic system.

We compare the results of Table~\ref{tab:poissonA_rho} with those reported in \cite{dell2024multinode}. The comparison is given in Table~\ref{tab:error_comparison}. The GBMSC row corresponds to the representative configuration of Table~\ref{tab:poissonA_rho} for which the method attains round-off accuracy.

\begin{table}[h]
\centering
\caption{Comparison with reference methods for the mixed problem \ref{eq:poisson-mixed-model} with polynomial exact solution $u_3$. The table reports the polynomial degree used in the approximation, collocation points, the discrete errors, and the condition number $\kappa$.}
\label{tab:error_comparison}
\begin{tabular}{lcccccc}
\toprule
Method & Polynomial degree & Collocation points
& $e_{\max}$ & $e_{\mathrm{mean}}$ & $e_{\mathrm{RMS}}$ & $\kappa$ \\
\midrule
GBMSC & $(m_x,m_y)=(3,1)$ & 35
& \textbf{1.67e-15} & \textbf{3.40e-16} & \textbf{4.59e-16} & \textbf{1.76e+04} \\

MSC & $4$ & 353
& 9.57e-14 & 2.70e-14 & 3.53e-14 & 6.41e+05 \\

Kansa M6 (pol) & $4$ & 353
& 3.53e-13 & 7.53e-14 & 1.06e-13 & 1.09e+14 \\

Kansa M6 & -- & 353
& 6.57e-05 & 2.31e-06 & 5.27e-06 & 1.11e+14 \\

Kansa IMQ (pol) & $4$ & 353
& 1.84e-09 & 5.98e-10 & 7.10e-10 & 1.47e+19 \\

Kansa MQ & -- & 353
& 3.19e-06 & 6.33e-07 & 7.91e-07 & 1.55e+19 \\

Kansa IMQ & -- & 353
& 2.51e-06 & 4.60e-07 & 5.96e-07 & 2.28e+19 \\

Kansa MQ (pol) & $4$ & 353
& 9.08e-11 & 2.68e-11 & 3.23e-11 & 5.16e+21 \\
\bottomrule
\end{tabular}
\end{table}

The comparison shows that the GBMSC discretization reaches the smallest errors among the reported methods. The MSC method also provides an accurate approximation, but with a larger condition number. The Kansa's methods show that polynomial enrichment improves the approximation error, but the corresponding matrices remain highly ill-conditioned. This is the main challenge of global RBF collocation.

For the present polynomial test, the relevant point is the simultaneous occurrence of round-off accuracy and moderate conditioning. The GBMSC result not only improves the error level; it also produces an algebraic system whose condition number is significantly smaller than those obtained by the Kansa's methods and smaller than that obtained by the MSC method. This confirms the suitability of the anisotropic GBMSC construction for mixed boundary conditions.

\subsubsection{Problem with Non-Polynomial Solution}
We consider the mixed Poisson problem on $\mathcal D=(0,1)^2$ with exact solution
\[
u_4(x,y)=e^{2x+3y}.
\]
The corresponding right-hand side is
\[
f(x,y)=-13e^{2x+3y},
\]
and the boundary data are defined by
\[
g_{\mathrm{Neu}}=\nabla u_4\cdot \mathbf n_{\Gamma_{\mathrm{Neu}}}
\quad\text{and}\quad
g_{\mathrm{Dir}}=u_4|_{\Gamma_{\mathrm{Dir}}},
\]
where
\[
\Gamma_{\mathrm{Dir}}=\{y=0\ \hbox{or}\ y=1\}
\quad\text{and}\quad
\Gamma_{\mathrm{Neu}}=\{x=0\ \hbox{or}\ x=1\}.
\]
Unlike the previous polynomial test, the function $u_4$ is not exactly reproduced by any local polynomial space of fixed degrees. Hence, this example is used to assess the approximation properties of the GBMSC method for a smooth non-polynomial solution and to compare its accuracy and conditioning with the MSC and Kansa's methods reported in \cite{dell2024multinode}.

The comparison setting is summarized in Table~\ref{tab:params_poisson_mix_nonpoly}. For the GBMSC method, the computations are performed on a Cartesian grid with $580$ collocation points, while the reference methods use $500$ Halton points in the interior and $84$ boundary points. Thus, the numbers of collocation points are comparable, but the node distributions are different. The errors for GBMSC are evaluated on a $101\times101$ Cartesian grid, while the reference results of \cite{dell2024multinode} are reported on a $40\times40$ evaluation grid. Consequently, the comparison must be interpreted as a comparison of error levels and conditioning for the same model problem, rather than as a pointwise reproduction of the same discrete experiment.

\begin{table}[h]
\centering
\caption{Computational setting for the mixed problem \ref{eq:poisson-mixed-model} with exact solution $u_4$. For the reference methods, the parameters are those reported in \cite{dell2024multinode}.}
\label{tab:params_poisson_mix_nonpoly}
\begin{tabular}{lcccc}
\toprule
Method & Node distribution & Collocation points & Polynomial degree & Evaluation grid \\
\midrule
GBMSC & Cartesian grid & $20\times 29=580$ & $m=4,\ldots,11$ & $101\times101$ \\
MSC & Halton + boundary nodes & $500+84$ & $4,\ldots,11$ & $40\times40$ \\
Kansa MQ, IMQ, M6 & Halton + boundary nodes & $500+84$ & -- & $40\times40$ \\
Kansa MQ, IMQ, M6 (pol) & Halton + boundary nodes & $500+84$ & $4,\ldots,11$ & $40\times40$ \\
\bottomrule
\end{tabular}
\end{table}

The comparison with the total-degree $r$ in MSC discretization is carried out in terms of the error indicators and of the conditioning of the resulting linear systems. In Table~\ref{tab:params_poisson_mix_nonpoly}, and in the corresponding numerical comparison, we set $m=r$. This choice is not intended to match the dimensions of the two local polynomial spaces. Rather, it allows us to compare the two constructions under the same nominal polynomial degree and, therefore, to isolate the effect of the local polynomial structure on both accuracy and conditioning.

The results in Table~\ref{tab:GBMSC_nonpoly_mixed} show a clear decrease in error as the degree $m$ increases. The mean error decreases from $2.60\mathrm{e}{-04}$ for $m=4$ to $8.46\mathrm{e}{-11}$ for $m=11$. The same trend is observed for $e_{\max}$ and $e_{\mathrm{RMS}}$. The Neumann-boundary error also decreases, from $7.13\mathrm{e}{-04}$ to $1.88\mathrm{e}{-10}$, which indicates that the discrete normal derivative is consistent with the imposed flux condition. The increase in $\kappa$ with $m$ is expected, as larger local polynomial spaces produce more sensitive local interpolation and differentiation matrices. Nevertheless, the condition number remains below $10^7$ for all reported degrees.

\begin{table}[h]
\centering
\caption{Numerical results of the GBMSC method for the mixed Poisson problem \ref{eq:poisson-mixed-model} with non-polynomial exact solution $u_4$. The table reports the errors in the domain, the Neumann-boundary error, and the condition number $\kappa$ for increasing local polynomial degree $m=m_x=m_y$.}
\label{tab:GBMSC_nonpoly_mixed}
\begin{tabular}{cccccc}
\toprule
$m$ & $e_{\max}$ & $e_{\mathrm{mean}}$ & $e_{\mathrm{RMS}}$
& $e_{{\mathrm{Neu}}}$ & $\kappa$ \\
\midrule
4  & 1.44e-03 & 2.60e-04 & 3.98e-04 & 7.13e-04 & 3.07e+04 \\
5  & 9.09e-04 & 2.06e-04 & 4.66e-04 & 5.89e-05 & 9.88e+04 \\
6  & 1.31e-05 & 2.38e-06 & 3.63e-06 & 5.02e-06 & 6.35e+04 \\
7  & 9.11e-07 & 4.06e-07 & 4.80e-07 & 4.38e-07 & 2.26e+05 \\
8  & 1.24e-07 & 2.26e-08 & 3.42e-08 & 3.90e-08 & 3.34e+05 \\
9  & 1.80e-08 & 5.73e-09 & 6.77e-09 & 3.52e-09 & 8.47e+05 \\
10 & 1.18e-09 & 2.17e-10 & 3.24e-10 & 3.59e-10 & 2.94e+06 \\
11 & 3.59e-10 & 8.46e-11 & 1.07e-10 & 1.88e-10 & 7.57e+06 \\
\bottomrule
\end{tabular}
\end{table}

The comparison with the reference methods is reported in Figure~\ref{fig:nonpoly_comparison}. For small degrees, the GBMSC method has an error level comparable to some Kansa's approximations, while the MSC and polynomial-augmented Kansa MQ are more accurate. From $m=6$, the GBMSC error decreases rapidly. For $m=8$, it is already below the non-augmented Kansa curves and close to the best polynomial-augmented curves. For $m=10$ and $m=11$, the GBMSC method achieves the smallest mean error among the plotted methods.

\begin{figure}
\centering
     \begin{subfigure}[b]{0.4\textwidth}
    \centering
    \includegraphics[width=\linewidth]{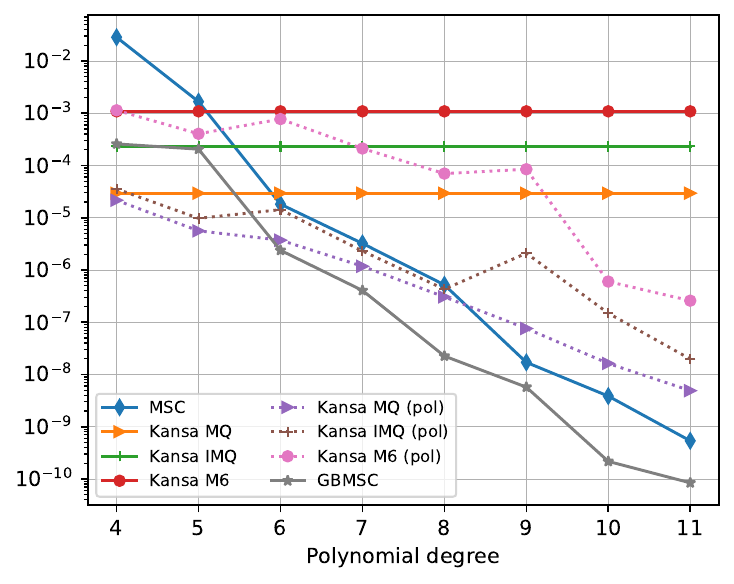}\\[-1mm]
    {\footnotesize (a) $e_{\mathrm{mean}}$ with respect to polynomial degree}
  \end{subfigure}
  \hspace{0.5cm}
  \begin{subfigure}[b]{0.4\textwidth}
    \centering
    \includegraphics[width=\linewidth]{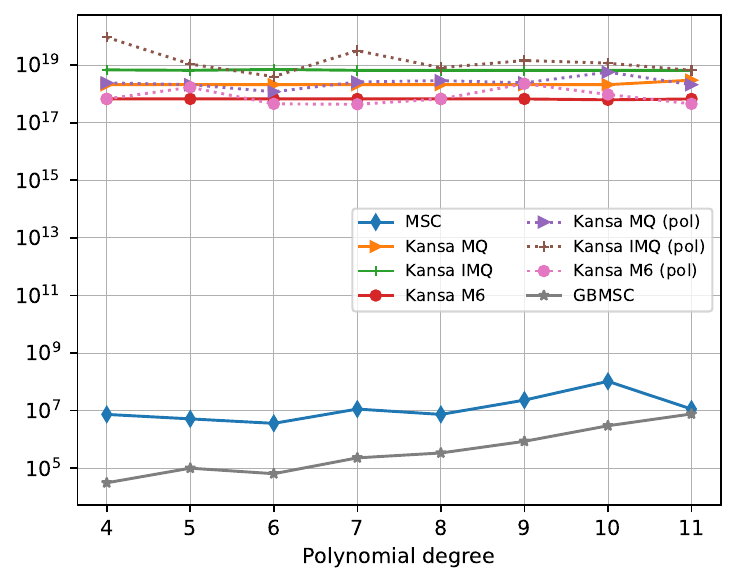}\\[-1mm]
    {\footnotesize (b) $\kappa$ with respect to polynomial degree}
  \end{subfigure}
\caption{Comparison with the reference methods of \cite{dell2024multinode} for the non-polynomial mixed problem \ref{eq:poisson-mixed-model}. Plot (a) reports the mean error, whereas plot (b) reports the condition number as a function of the polynomial degree.}
\label{fig:nonpoly_comparison}
\end{figure}

The conditioning behavior is  different for the considered methods. The Kansa and polynomial-augmented Kansa matrices have condition numbers between $10^{17}$ and $10^{20}$ in the plotted range. This extreme ill-conditioning is consistent with the global nature of the radial basis functions. By contrast, the GBMSC condition numbers remain between $10^4$ and $10^7$, and therefore lie in the same range as the MSC method. In particular, the GBMSC method reaches the smallest errors at high degree while preserving a condition number several orders of magnitude below those of the Kansa's discretizations.

Figure~\ref{fig:nonpoly_GBMSC_convergence}(a) shows the convergence of $e_{{\mathrm{Neu}}}$ under grid refinement. The empirical orders increase with the polynomial degree $m$ and are approximately $4.05$, $4.99$, $5.91$, $6.88$ and $7.63$ for $m=4,\ldots,8$, respectively. These values are consistent with an increase in the local polynomial precision. Figure~\ref{fig:nonpoly_GBMSC_convergence}(b) shows the corresponding growth of the condition number with respect to $\mathrm{DOF}=N_x^2$. The fitted exponents are close to one, which indicates a nearly linear growth of $\kappa$ with the number of degrees of freedom in the tested range.

\begin{figure}
\centering
     \begin{subfigure}[b]{0.4\textwidth}
    \centering
    \includegraphics[width=\linewidth]{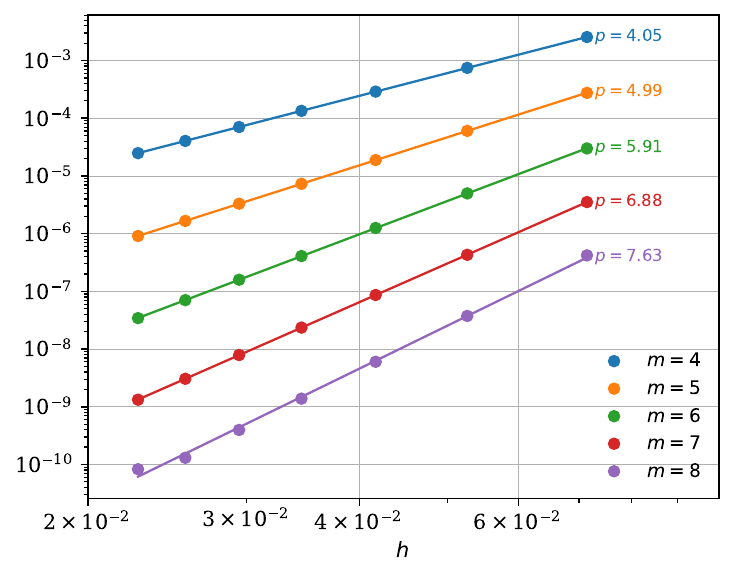}\\[-1mm]
    {\footnotesize (a) $e_{{\mathrm{Neu}}}$ with respect to $h$}
  \end{subfigure}
  \hspace{0.5cm}
  \begin{subfigure}[b]{0.4\textwidth}
    \centering
    \includegraphics[width=\linewidth]{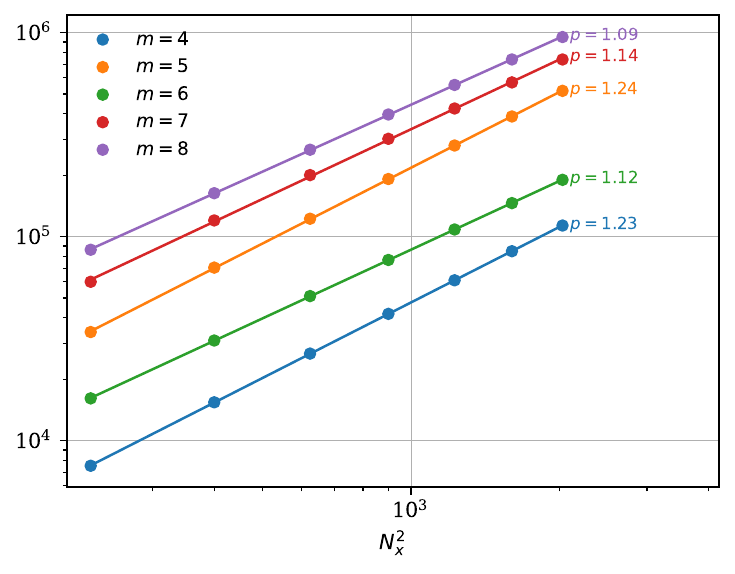}\\[-1mm]
    {\footnotesize (b) $\kappa$ with respect to $\mathrm{DOF}$}
  \end{subfigure}
\caption{Convergence and conditioning of the GBMSC approximation for the non-polynomial mixed problem \ref{eq:poisson-mixed-model}. Plot (a) reports the Neumann-boundary error $e_{{\mathrm{Neu}}}$ as a function of $h$, while plot (b) reports the condition number $\kappa$ as a function of $\mathrm{DOF}=N_x^2$, for different polynomial degrees $m$. The values of $p$ are obtained by least-squares fitting of $e_{{\mathrm{Neu}}}=Ch^p$ and $\kappa=C(\mathrm{DOF})^p$ on logarithmic data.}
\label{fig:nonpoly_GBMSC_convergence}
\end{figure}

It is important to note that the MSC curve remains well conditioned and accurate, especially for intermediate and high degrees. However, the GBMSC curve shows a stronger decay of the mean error at the largest degrees considered here. Thus, for this non-polynomial mixed-boundary test, the GBMSC discretization combines high-order approximation with controlled conditioning. The behavior of $e_{{\mathrm{Neu}}}$ also shows that this accuracy is not restricted to the interior solution values but extends to the normal derivative imposed on the Neumann boundary.

\section*{Conclusion}

In this paper, we introduced the Grid-Based Multinode Shepard Collocation Method (GBMSC) for the numerical solution of two-dimensional elliptic boundary value problems in rectangular domains. The construction is based on overlapping Cartesian subgrids, local Lagrange interpolation, and Shepard-type partition functions. This formulation yields global cardinal basis functions whose derivatives can be assembled from local differentiation matrices and derivatives of the weights.

The numerical results confirm the main properties of the proposed discretization. Polynomial solutions are reproduced with round-off precision throughout the interior of the domain and along the Neumann boundary. For smooth non-polynomial solutions, the errors decrease as the grid is refined and the local polynomial degree is increased. The overlap parameter affects both the accuracy and the conditioning. In fact, larger overlaps improve the accuracy and the conditioning, while smaller overlaps reduce the computational cost but may deteriorate the algebraic stability. The comparison of the GBMSC method with the MSC and Kansa's methods shows that our method presents higher precision and better numerical stability. Future work will extend the GBMSC method to irregular domains.

\section*{Code availability}
A preliminary Python implementation of the proposed method is available in the GitHub repository
\url{https://github.com/e-anouar/gbmsc-pde} archived on Zenodo \cite{el_harrak_2026_gbmsc_pde}. The implementation is publicly available as \texttt{gbmsc-pde}, an installable Python package distributed through PyPI with documentation, examples, and validation tests. The software is distributed
under the Apache License 2.0.

\bibliographystyle{unsrt}
\bibliography{mybibliography}
\end{document}